\subjclass[2010]{14T05 and 32P05} 
\newenvironment{customthm}[1]
  {\innercustomthm}
  {\endinnercustomthm}
\DeclareMathAlphabet{\mathbbold}{U}{bbold}{m}{n}
\title{{\larger\larger H}ahn analytification and connectivity\\ of higher rank tropical varieties}
\author{{\larger T}{\smaller yler}\ \ {\larger F}{\smaller oster}\ \ \&\ \ {\larger D}{\smaller hruv}\ \ {\larger R}{\smaller anganathan}}
\date{\today}
\address{{\bf Tyler Foster}\newline Department of Mathematics, University of Michigan}
\email{tyfoster@umich.edu}
\address{{\bf Dhruv Ranganathan}\newline Department of Mathematics, Yale University}
\email{dhruv.ranganathan@yale.edu}
\newtheorem{theorem}{Theorem}[subsection]
\newtheorem{corollary}[theorem]{Corollary}
\newtheorem{proposition}[theorem]{Proposition}
\newtheorem{definition}[theorem]{Definition}
\newtheorem{warning}[theorem]{Warning}
\newtheorem{construction}[theorem]{Construction}
\newtheorem{example}[theorem]{Example}
\newtheorem{quasi-theorem}[theorem]{Quasi-Theorem}
\newtheorem{blank remark}[theorem]{}
\newtheorem{rem1}[theorem]{Remark}
\newenvironment{remark}{\begin{rem1}\em}{\end{rem1}}
\newtheorem{not1}[theorem]{Notation}
\newcommand{\PP}{\mathbb{P}}         
\newcommand{\QQ} {{\mathbb Q}}		
\newcommand{\RR} {{\mathbb R}}		
\newcommand{\ZZ} {{\mathbb Z}}
\newcommand{\Hom}{\operatorname{Hom}}
\DeclareMathOperator{\val}{val}
\DeclareMathOperator{\spec}{Spec}
\DeclareMathOperator{\ev}{{ev}}
\DeclareMathOperator{\red}{{red}}
\newcommand{\cal}{\mathcal}
\def\cM{{\cal M}}
\def\fH{\mathfrak{H}}
\def\fp{\mathfrak{p}}
\def\trop{\mathrm{trop}}
\newcommand{\Mbar}{\overline{\cM}}
\def\lra{\longrightarrow}
\newcommand{\mono}{\!\xymatrix{{}\ar@{^{(}->}[r]&{}}\!}
\newif\ifshow
\begin{document}
\pagestyle{plain}
\maketitle

\begin{abstract}
We show that the tropicalization of a connected variety over a higher rank valued field is a path connected topological space. This establishes an affirmative answer to a question posed by Banerjee~\cite{Ban13}. Higher rank tropical varieties are studied as the images of "Hahn analytifications", introduced in this paper. A Hahn analytification is a space of valuations on a scheme over a higher rank valued field. We prove that the Hahn analytification is related to higher rank tropicalization by means of an inverse limit theorem, extending well-known results in the non-Archimedean case. We also establish comparison results between the Hahn analytification and the Huber and Berkovich analytifications, as well as the Hrushovski-Loeser stable completion. 
\end{abstract}

\setcounter{section}{0}
\setcounter{subsection}{0}

\section{Introduction}

	In recent years, numerous authors have studied the relationship between tropical and analytic geometry over rank-$1$ valued fields \cite{BPR, EKL06, Gub13, Pay09,R12}. Two of the fundamental results in the subject are that tropicalizations of subvarieties of tori have a polyhedral structure, proved by Bieri and Groves~\cite{BG84}, and that the tropicalization of a connected variety is connected, proved by Einsielder, Kapranov, and Lind~\cite{EKL06}. The latter may also be seen as an elementary consequence of a deep theorem in Berkovich's theory of analytic spaces~\cite[Chapter 3]{Ber90}. 
	
	The purpose of the present text is to extend the study of this relationship between tropical and analytic geometry to valued fields of arbitrary finite rank, and extend these two fundamental results. We introduce a new form of tropicalization over higher rank valued fields and prove that the tropicalization of a connected subvariety of a torus gives rise to a path connected topological space. This resolves a question posed by Banerjee in~\cite{Ban13}. The proof of connectivity relies on a new theory of analytification over higher rank valued fields, which reduces to Berkovich analytification in rank-$1$. We describe in detail the relationship between this theory and the Berkovich and Huber approaches to geometry over non-Archimedean fields~\cite{Ber90,Hub}, as well as Hrushovski and Loeser's theory of stable completions \cite{HL10}. See Section~\ref{sec: main} for a precise statement of the main results. 

	This paper is complemented by~\cite{FR15b}, in which we study multistage degenerations of toric varieties over higher rank valuation rings. While valued fields and their geometry are of basic interest, our central motivation is toward the theory of limit linear series. Multistage degenerations can be used to interpolate between the compact type theory of Eisenbud and Harris~\cite{EH86} and the maximally degenerate tropical theory used by Jensen and Payne in~\cite{JP15A}~\cite{JP15B}. As pointed out in~\cite[Remark 1.4]{JP15A}, the connection between these approaches remains an important open question. In a later work, we intend to study this connection and use it to establish further results in the vein of ~\cite{EH86}, ~\cite{JP15A}, and ~\cite{JP15B}.

\subsection{Hahn analytification}\label{sec: hahn-preliminary} Let $K$ be a field equipped with a valuation
$$
\nu: K^\times\lra \Gamma,
$$
where $\Gamma$ is a totally ordered abelian group. Fix a positive integer $k$ and denote by $\RR^{(k)}$ the \newline $k$-fold product of $\RR$, equipped with the lexicographic ordering. Choose an order preserving homomorphism $\rho: \Gamma\lra \RR^{(k)}$. 

Let $X = \spec_{\ \!}A$ be an affine $K$-variety. The \textit{Hahn analytification of $X$} is the set of ring valuations
$$
|X^\fH|\ :=\ \left\{A\xrightarrow{\val}\RR^{(k)}\sqcup \{\infty\}: \val(z) = \rho\circ \nu(z), \ \textnormal{for all } z\in K\right\}.
$$
By definition, a {\em ring valuation} $\val:A\lra\RR^{(k)}\sqcup \{\infty\}$ is a map satisfying $\val(0)=\infty$, $\val(ab) = \val(a)+\val(b)$, and $\val(a+b)\geq \min\{\val(a),\val(b)\}$.

We put two distinct topologies on the set $|X^{\fH}|$:

\noindent
{\bf(The Extended Order Topology on $|X^\fH|$.)} Give $\RR^{(k)}\sqcup \{\infty\}$ the {\em extended order topology} by declaring that $r<\infty$ for all $r\in \RR^{(k)}$. Equip the set $|X^\fH|$ with the weakest topology making the evaluation functions
\begin{eqnarray*}
\ev_f\ :\ \ |X^\fH|&\lra& \RR^{(k)}\sqcup \{\infty\}\\
\val_x&\longmapsto& \val_x(f)
\end{eqnarray*}
continuous with respect to this extended order topology on $\RR^{(k)}\sqcup\{\infty\}$, for all $f\in A$. That is, $\infty$ is a global maximum, and $a<\infty$ for all $a\in \RR^{(k)}$. Denote the resulting topological space by $X^\fH$. We will refer to this space as the Hahn analytification in the extended order topology. 

\noindent
{\bf (The Extended Euclidean Topology on $X^\fH$.)} Extend the Euclidean topology on $\RR$ to the topology on $\RR_{\infty} = \RR\sqcup \{\infty\}$ for which the completed rays $(a,\infty]$, $a\in \RR$, form a basis of open neighborhoods at $\infty$. The {\em extended Euclidean topology} on $\RR^{(k)}\sqcup\{\infty\}$ is the subspace topology obtained by identifying $\RR^{(k)}\sqcup\{\infty\}$ with the subspace $\mathbb{R}^{k}\cup\big\{(\infty,\dots,\infty)\big\}$ of $(\RR_\infty)^k$. Equip $|X^\fH|$ with the weakest topology making the evaluation functions $\ev_f$ continuous with respect to the resulting subspace topology on $\RR^{(k)}\sqcup \{\infty\}$. Denote the resulting topological space by $X^\fH_\#$. We will refer to this space as the Hahn analytification in the extended Euclidean topology. The reader may think of the point $\infty$ in $\RR^{(k)}\sqcup \{\infty\}$ as being a ``sharp'' corner that partially compactifies $\RR^k$. 

\begin{remark}{\bf(Basic topological properties).}
When $k = 1$, the extended Euclidean topology and extended order topology on $\RR\sqcup \{\infty\}$ coincide. For $k\geq 1$, the space $\RR^{(k)}\sqcup \{\infty\}$ is Hausdorff and non-compact in both topologies. When $k\geq 2$, the order topology on $\RR^{(k)}$ is strictly finer than the Euclidean topology on $\RR^{(k)}$, while the extended order and Euclidean topologies on $\RR^{(k)}\sqcup\{\infty\}$ are incomparable. 
\end{remark}

\begin{remark}{\bf (Alternative topologies).}
Note that for $k\geq 2$, there are other ways in which one might extend the Euclidean topology on $\RR^{(k)}$ to a topology on $\RR^{(k)}\sqcup \{\infty\}$. For instance, if one gives $\infty$ an open neighborhood basis consisting of sets of the form $U_{a}:=\{\infty\}\sqcup\big\{(r_{1},\dots,r_{k})\in\RR^{k}:r_{1}>a\big\}$ for all $a\in\RR$, then the resulting topology on $\RR^{(k)}\sqcup\{\infty\}$ is strictly coarser than the extended order topology on $\RR^{(k)}\sqcup\{\infty\}$. The results of this paper that make use of the extended Euclidean topology are unaffected if we replace the extended Euclidean topology on $\RR^{(k)}\sqcup\{\infty\}$ by any other topology that is path connected.
\end{remark}

\subsection{Hahn tropicalization} Let $T$ be a split algebraic torus of dimension $d$ with character lattice $M$, and let $X = \spec_{\ \!}A$ be a closed subvariety of $T$. For each point $x\in X^\fH$, i.e., for each valuation $\val_x:A\lra \RR^{(k)}\sqcup\{\infty\}$, consider the composite
	\begin{equation}\label{equation: trop composite}
	M\lra K[M]\lra A\lra \RR^{(k)}\sqcup \{\infty\}.
	\end{equation}
For characters $\chi^u,\chi^v\in M$, the valuation axioms imply $\val_x(\chi^{u+v}) = \val_x(\chi^u)+\val_x(\chi^v)$. Thus, (\ref{equation: trop composite}) can be taken to be a homomorphism of abelian groups, which we denote
	$$
	\trop(x):M\lra\RR^{(k)}.
	$$
In this way, we obtain a {\em tropicalization map}
	\begin{equation}\label{equation: the tropicalizaiton map}
	\trop: |X^\fH|\lra \Hom_{\ZZ}(M,\RR^{(k)}).
	\end{equation}
	
	The \textit{Hahn tropicalization} of $X$, denoted $|\trop(X)|$, is the image of $X^\fH$ under (\ref{equation: the tropicalizaiton map}). When no confusion can arise, we simply refer to $|\trop(X)|$ as the {\em tropicalization} of $X$. The order and Euclidean topologies on $\RR^{(k)}$ determine two distinct topologies on the set $\Hom_{\ZZ}(M,\mathbb{R}^{(k)})$. We let $\trop(X)$ denote the Hahn tropicalization with the subspace topology for the order topology on $\RR^{(k)}$, and we let $\trop_\#(X)$ denote the Hahn tropicalization with the subspace topology for the Euclidean topology. Observe that the topological space $\trop(X)$ (resp. $\trop_\#(X)$) is the continuous image of $X^\fH$ (resp. $X^\fH_\#$) under the map $\trop$. 

\begin{remark}
When $k = 1$, $\trop_\#(X)$ and $\trop(X)$ coincide with the image of the Berkovich analytification of $X$ under the standard tropicalization map. Aroca~\cite{Aroca10} previously studied tropicalizations of hypersurfaces over higher rank valued fields by extending the theory of Newton polygons. When $K$ is a field with value group equal to $\RR^{(k)}$, her definition coincides with ours. Note that Aroca's tropicalizations do not carry a topology. The higher rank tropicalization studied by Banerjee~\cite{Ban13} is the closure of $\trop_\#(X)$ in the Euclidean topology. We discuss the relationship with Banerjee's work in more detail in Section~\ref{sec: banerjee}.
\end{remark}

\subsection{Main results}\label{sec: main} If $X$ is a connected, closed subvariety of a torus over a rank-$1$ non-Archimedean field, then the usual tropicalization of $X$ is connected. This result was first proved by Einsiedler, Kapranov, and Lind~\cite{EKL06} using rigid analytic techniques, but it can also be obtained as an elementary consequence of connectivity of the Berkovich analytification $X^{\mathrm{an}}$. Our main results in the present text are extensions of this connectivity result to the higher rank setting.

\begin{customthm}{A}\label{thm: tropical-theorem}
Let $X$ be a connected subvariety of an algebraic torus over $K$. Then $\trop_\#(X)$ is a path connected topological space.
\end{customthm}

From Theorem \ref{thm: tropical-theorem} together with the results of Section~\ref{sec: banerjee} we deduce connectivity for Banerjee's tropicalization.

\begin{customthm}{B}\label{thm: tropical-theorem'}
Let $X$ be a connected subvariety of an algebraic torus over $K$. Then $\trop(X)$ is a definable and definably path connected space.
\end{customthm}

\begin{remark}
In the case where $K$ is Henselian, Ducros proves a model theoretic connectivity result for a distinct but related tropicalization that appears in~\cite[Theorem 1.2]{Duc12}. We note however that the techniques used in loc. cit. are based on model theory, and are quite different from those that appear here.
\end{remark}
	
	The connectivity results of Theorems \ref{thm: tropical-theorem} and \ref{thm: tropical-theorem'} are consequences of basic properties of the Hahn analytification itself.
	
\begin{customthm}{C}\label{thm: Hahn analytification structure}
	If $X$ is a geometrically connected $K$-variety, then $X^{\fH}_{\#}$ is path connected. Furthermore, for any $\RR^{(k)}$-valued field $F$ extending $K$, each pair of $F$-rational points $x$ and $y$ in $X_{F}:=X\times_{K}F$ are connected by a definable path in $X^{\fH}_{\!F}$.
	
	If $K$ is algebraically closed, and if $X$ is a $K$-variety that can be realized as a closed subvariety of a toric $K$-variety, then $X^{\fH}$ is a prodefinable set.
\end{customthm}

The Euclidean connectivity, definable connectivity, and prodefinability results in Theorem \ref{thm: Hahn analytification structure} are restated and proved in Theorems~\ref{thm: xH-euclidean-path-connectivity}, ~\ref{thm: xH-galois-connectivity}, and~\ref{cor: prodefinability} respectively.

\begin{remark}
Definability and prodefinability are basic concepts in logic and the theory of $o$-minimal structures~\cite{vdD98}. The relevant ideas are reviewed in Section~\ref{sec: definability}.  One can understand the definability of $\trop(X)$ as a reflection of the fact that $\trop(X)$ is a finite union of polyhedra in $(\RR^{(k)})^d$. The definable path connectivity of $\trop(X)$ is the statement that any two points in $\trop(X)$ may be connected by a path parametrized by a (generalized) interval in the ordered abelian group $\RR^{(k)}$. Moreover, this interval is embedded in $(\RR^{(k)})^d$ as a rational $1$-dimensional polyhedral complex. Note that the naive connectivity statement for the order topology is false, as $\RR^{(k)}$ is disconnected for $k\geq 2$. 
\end{remark}
	
	In Section~\ref{sec: comparison} we discuss the relationship between the Hahn analytification and the Huber adic space, and the stable completion appearing in recent work of Hrushovski and Loeser. In Section~\ref{ref: tropicalization}, we prove that the Hahn analytification and tropicalizations are related by an inverse limit theorem (Theorem~\ref{thm: inverse-limit}) in the spirit of~\cite{FGP,Pay09}. The prodefinability of $X^\fH$ is an immediate consequence.

\subsection*{Acknowledgements} We are grateful to Sam Payne for conversations that helped shape this project, and to Dan Abramovich, Dori Bejleri, Yoav Len, and Sam Payne for comments on previous drafts. Important progress was made while the authors were attending the AMS Western Sectional Meeting on Combinatorial Algebraic Geometry and we thank Farbod Shokrieh and Madhusudan Manjunath for creating that opportunity. We have benefited from many conversations with friends and colleagues including Matt Baker, Dan Corey, Walter Gubler, and Jeremy Usatine. We thank the anonymous referees for comments that helped improve the article. T.F. was partially supported by NSF RTG grant DMS-09343832. D.R. was partially supported by NSF grant CAREER DMS-1149054. This work was completed while D.R. was visiting Brown University and the University of Michigan, and it is a pleasure to thank these institutions here. 

\section{Hahn analytification and connectivity theorems}\label{sec: hahn-analytification}

Throughout the present section, fix the following notation. Let $K$ be a field equipped with a valuation $\nu:K^\times\to \Gamma$. Fix an order preserving homomorphism $\rho:\Gamma\to \RR^{(k)}$. We refer to the data of $K$ together with the maps $\nu$ and $\rho$ as a \textit{Hahn valued field}. Let $R$ denote the valuation ring associated to the valuation $\rho\circ \nu$. A {\em Hahn field extension} of the triple $(K,\nu,\rho)$ is a field extension $L$ of $K$ equipped with a valuation $\nu_L:L^\times\to \RR^{(k)}$ such that $\nu_L(z) = \rho\circ \nu(z)$ for all $z\in K$. 

\begin{remark}
It is often desirable to choose $\rho$ to be an embedding into $\RR^{(k)}$. Such an embedding always exists by a theorem of Hahn~\cite{Hahn}. If the rank\footnote{Recall that the \textit{rank} of a totally ordered abelian group is the number of proper convex subgroups contained in it.} of $\Gamma$ is finite, $k$ can be taken to be equal to the rank of $\Gamma$. Nonetheless, it is convenient to allow $\rho$ to not be non-injective in general.
\end{remark}

Let $X$ be a separated finite type $K$-scheme.  Consider pairs consisting of a Hahn valued field extension $L$ of $K$ and a point $x\in X(L)$ . There is an equivalence relation generated by declaring that
$$
(L,x)\ \sim\ (L\!',x')
$$
whenever there is an embedding $L\hookrightarrow L\!'$, such that $\nu_{L\!'}$ restricts to $\nu_L$ on the subfield $L$, and that $x\mapsto x'$ under the induced inclusion of point sets $X(L)\hookrightarrow X(L\!')$. Set theoretically, we define the {\em Hahn analytification} $|X^\fH|$ to be the resulting collection of equivalence classes of points of $X$ over valued extensions of $K$. That is,
$$
|X^\fH| \ :=\ \big\{(L,x)\big\}\big/{}_{\mbox{{\larger $\sim$}}}\ .
$$

	Suppose $X = \spec(A)$. Given a valuation $\val:A\to \RR^{(k)}\sqcup \{\infty\}$, the kernel $\val^{-1}(\infty)$ is a prime ideal. In turn this gives rise to a Hahn valuation  fraction field $\mathrm{Frac}(A/\fp)$. Thus, the above definition of analytification is equivalent for affine schemes to the one given in Section~\ref{sec: hahn-preliminary}. As $X$ is covered by affine opens, $|X^\fH|$ is the union of the Hahn analytifications of these affine opens. As in the Berkovich setting~\cite[Section 3.4]{Ber90}, the topologies on these affines agree on their overlaps and determine a global topology on the set $|X^\fH|$. We let $X^\fH$ denote $|X^{\fH}|$ with its order topology, and we let $X^\fH_\#$ denote $|X^{\fH}|$ with its Euclidean topology.

\noindent
The construction of $X^{\fH}$ and $X^{\fH}_{\#}$ is covariantly functorial. A morphism $f:X\to Y$ of finite-type $K$-schemes induces a natural a map $f^{\fH}:|X^{\fH}|\to|Y^{\fH}|$ that is continuous in both the order and Euclidean topologies.
	
	Given a point $x\in X(K)$, the composition of evaluation at $x$ with the valuation on $K$ defines a valuation on the coordinate ring of any affine open neighborhood of $x$. In this way, $X(K)$ becomes a subset of $|X^\fH|$.
	
	If $K$ is Henselian~\cite[Chapter 4]{EP05}, then the valuation on $K$ extends uniquely to the algebraic closure, giving an inclusion of the set of closed scheme theoretic points of $X$ into $|X^\fH|$.
	
\subsection{Tower of projections and relation to Berkovich analytification}\label{sec: berk-comparison} For any $0\leq j \leq k$, there exists a unique continuous order-preserving projection $\pi^k_j:\RR^{(k)}\to \RR^{(j)}$, namely projection to the \textit{first} $j$ factors. Composing the map $\rho$ with this projection $\pi^{k}_{j}$, we obtain a new Hahn valued field with the same underlying field $K$. Correspondingly, one obtains a tower of Hahn analytifications with continuous maps between them:
	\begin{equation}\label{projective relationship}
	\begin{aligned}
	\begin{xy}
	(0,0)*+{\ X^{\fH}_{k}\ }="k";
	(18,0)*+{\ X^{\fH}_{k-1}\ }="k-1";
	(36,0)*+{\ \cdots\ }="k-2";
	(54,0)*+{\ X^{\fH}_{2}\ }="2";
	(72,0)*+{\ X^{\fH}_{1}\ }="1";
	(90,0)*+{\ X^{\fH}_{0}\ }="0";
	{\ar "k"; "k-1"};
	{\ar "k-1"; "k-2"};
	{\ar "k-2"; "2"};
	{\ar "2"; "1"};
	{\ar "1"; "0"};
	\end{xy}.
	\end{aligned}
	\end{equation}
Here $X^\fH_j$ denotes the Hahn analytification of $X$ with respect to $K$ with its valuation 
$$
K^\times \xrightarrow{\ \ \nu\ \ } \Gamma\xrightarrow{\ \ \rho\ \ } \RR^{(k)}\xrightarrow{\ \pi^k_j\ }\RR^{(j)}.
$$	

\begin{warning}\textnormal{
Some care is required in order to handle the case of $j = 0$. The set $\{0\}\sqcup \{\infty\}$ is topologized as a connected doubleton, where the open sets are $\emptyset$, $\{0\}$ and $\{0,\infty\}$, and $\infty$ is the only closed point. Observe that the projection $\RR\sqcup \{\infty\}\to\{0\}\sqcup\{\infty\}$ taking $\RR\to\{0\}$ is continuous.}
\end{warning}

We point out two special cases.

\begin{example}\label{example: Zariski spectrum}
	The space $X^\fH_0$ coincides with the set of scheme theoretic points of $X$ in the Zariski topology. To see this, observe that for $X$ affine, the map $X \to X^\fH$ taking a prime $\fp$ to the trivial valuation on $K[X]/\fp$ yields a set theoretic bijection $|X|\cong|X_0^\fH|$. To identify $X^\fH_0$ and $X$ as topological spaces, note that for any regular function $f$ on an affine scheme, $\ev_f^{-1}(\infty)$ consists of exactly those prime ideals $\fp$ that contain $f$. In other words, $\ev_f^{-1}(\infty)=V(f)$. These generate the closed sets of the Zariski topology on $X$.
\end{example}

\begin{example}
	The space $X^\fH_1$ is homeomorphic to the Berkovich analytification\footnote{Note that for $K$ a non-complete rank-$1$ field, Berkovich analytification is redefined by Hrushovski and Loeser as a space of types, recovering the usual definition when $K$ is complete. Employing this definition, we can ignore questions about whether or not $K$ is complete with the rank-$1$ valuation described. See~\cite[Section 5]{HLP} for a nice discussion.} of $X$ with respect to the composite rank-$1$ valuation 
	$$
	K^\times \xrightarrow{\ \ \nu\ \ } \Gamma \xrightarrow{\ \ \rho\ \ } \RR^{(k)}\xrightarrow{\ \pi^k_1\ }\RR.
	$$
By Example \ref{example: Zariski spectrum}, the map $X^{\fH}_{1}\to X^{\fH}_{0}$ is the continuous map, to the underlying scheme, that realizes the universal property of Berkovich analytification~\cite[Section 3.5]{Ber90}.
\end{example}

\subsection{Euclidean path connectivity}

In this subsection we prove Theorem~\ref{thm: tropical-theorem}. The result is deduced from the following result about the structure of the Hahn analytification.

\begin{theorem}\label{thm: xH-euclidean-path-connectivity}
If $X$ is a geometrically connected $K$-variety, then the topological space $X^\fH_\#$ is path connected. 
\end{theorem}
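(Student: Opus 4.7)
My plan is to induct on the rank $k$, using the continuous tower of projections \eqref{projective relationship}. The base case $k=1$ is immediate: $X^{\fH}_{1,\#}$ coincides with the Berkovich analytification $X^{\mathrm{an}}$ by Section~\ref{sec: berk-comparison}, and Berkovich's theorem~\cite[Ch.~3]{Ber90} says this is path connected when $X$ is geometrically connected. The order and Euclidean topologies on $\RR\sqcup\{\infty\}$ coincide when $k=1$, so there is no ambiguity about which topology is at issue.

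For the inductive step I would exploit the continuous projection $\pi\colon X^{\fH}_{k,\#}\to X^{\fH}_{k-1,\#}$. By the standard correspondence between composite valuations and refinements through a valuation on the residue field, each fiber $\pi^{-1}(\bar v)$ is identified with (a relative form of) the Berkovich analytification of an appropriate reduction of $X$, and is therefore path connected by the rank-$1$ result applied to that reduction. Combined with the inductive hypothesis that $X^{\fH}_{k-1,\#}$ is path connected, this reduces the task to lifting paths in the base to paths in the total space---or, more modestly, to connecting any two points $p,q\in X^{\fH}_{k,\#}$ to a common fiber of $\pi$, where the fiber-connectedness then provides the remaining path.

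To perform the lift, I would use the essential feature of the extended Euclidean topology: $\RR^{(k)}\sqcup\{\infty\}$ sits inside $(\RR_\infty)^k$ as a path connected subspace, so a family of rank-$k$ valuations is Euclidean-continuous in its parameter as soon as each evaluation map $\ev_f$ varies continuously in $\RR^k$. This permits linear interpolation in the Euclidean coordinates of valuations, which is impossible in the order topology precisely because $\RR^{(k)}$ is order-disconnected for $k\ge 2$. Before invoking the lifting step, I would carry out two preliminary reductions: first to $X$ affine, by gluing paths across overlapping affine opens (using that $U^{\fH}_{\#}\subset X^{\fH}_{\#}$ is open for open $U\subset X$ together with connectedness of $X$), and then, if helpful, to the case of a curve by joining two $\Omega$-rational points of $X_\Omega$ through a chain of integral curves---where $\Omega$ is any sufficiently large $\RR^{(k)}$-valued extension of $K$ and $X_\Omega$ is connected by geometric connectedness of $X$.

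The main obstacle I anticipate is constructing explicit interpolating families of rank-$k$ valuations that remain compatible with the fixed valuation on $K$: one cannot simply scale a valuation on $A$, since that would alter its restriction to $K$, so the interpolation must occur only along the ``new'' directions introduced by $A/K$. Once one isolates a finite generating set of $A$ over $K$ and controls the interpolated values of its generators, the continuity of the interpolating family reduces to a Euclidean continuity check for finitely many evaluation maps $\ev_f$, which the sharp-corner compactification of $\RR^k$ is designed to accommodate. Verifying this continuity at the parameter values where some $\ev_f$ limits to the boundary point $\infty$ will be the technical core of the argument.
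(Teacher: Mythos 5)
Your proposed induction on rank via the tower $X^{\fH}_k\to X^{\fH}_{k-1}$ is structurally appealing, and your preliminary reductions (to affine, to curves, to $F$-rational points over a large Hahn extension) are the same ones the paper uses. But the core of your inductive step has a genuine gap: linear interpolation of valuations does not produce valuations. If $\val_p,\val_q:A\to\RR^{(k)}\sqcup\{\infty\}$ are two valuations and $\val_t := (1-t)\val_p + t\val_q$, then $\val_t$ does satisfy $\val_t(ab)=\val_t(a)+\val_t(b)$ and restricts correctly to $K$; however the ultrametric inequality $\val_t(a+b)\ge\min\{\val_t(a),\val_t(b)\}$ can fail. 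Concretely, take $a,b$ with $\val_p(a)=0$, $\val_p(b)=1$, $\val_q(a)=1$, $\val_q(b)=0$ (in some coordinate), and $t=\tfrac12$: one gets $\min\{\val_t(a),\val_t(b)\}=\tfrac12$ but the interpolated bound on $\val_t(a+b)$ coming from the endpoint valuations is only $0$. This problem persists even if you restrict the interpolation to the last coordinate within a single fiber of $\pi$, so the ``more modest'' version of your plan---connect two points into a common fiber, then move within the fiber---inherits the same difficulty. Relatedly, the identification of a fiber $\pi^{-1}(\bar v)$ with a Berkovich analytification is not precise: refinements of $\bar v$ correspond to rank-$1$ valuations on the \emph{residue field} $\kappa(\bar v)$ over the residue field of $K$, which is a set of valuations on a single (typically non-finitely-generated) field, not the analytification of a finite-type scheme where Berkovich's connectivity theorem applies.

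The paper sidesteps all of this by constructing its paths explicitly as families of \emph{monomial} valuations. After reducing via Bertini to a geometrically irreducible curve $C$ over a Hahn field $E$ and taking a proper regular marked semistable model $\mathscr C/R_E$ (valuative criterion for $\Mbar_{g,n}$, plus a blowup to remove self-intersections), Proposition~\ref{prop: skeleton-embeds} embeds the dual graph $|G(\underline\ell)|_\#$ into $C^{\fH}_\#$: near a node $xy=f$ the point with parameter $\omega\in[0,\nu(f)]$ is the monomial valuation $\val_\omega(\sum a_{jk}x^jy^k)=\min\{\nu(a_{jk})+j\omega+k(\nu(f)-\omega)\}$, and similarly for marked sections. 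Each $\val_\omega$ is genuinely a valuation restricting correctly to $K$, and $\omega\mapsto\val_\omega$ is continuous in the extended Euclidean topology; since the dual graph is connected, $|G(\underline\ell)|_\#$ supplies the needed path. Your intuition that the Euclidean ``sharp corner'' topology is what makes intervals in $\RR^{(k)}\sqcup\{\infty\}$ path connected is exactly right and is used here---but the correct one-parameter family to run along an interval is the monomial family attached to a node of a semistable model, not an affine interpolation of the endpoint valuations.
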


Our proof of Theorem \ref{thm: xH-euclidean-path-connectivity} requires the following auxiliary construction. 

\begin{construction}\label{valued graphs}
Let $G$ be a finite graph with edges $e_1,\ldots, e_r$. Fix an $r$-tuple
	$$
	\underline \ell\ =\ \big(\ell(e_1),\ldots, \ell(e_r)\big)\ \in\ \big(\RR^{(k)}_{\geq 0}\sqcup \{\infty\}\big)^{\!r}.
	$$
For each $i$, consider the interval
\[
\big[0,\ell(e_i)\big] = \{ \gamma\in \RR^{(k)}\sqcup\{\infty\}: 0\leq \gamma \leq \ell(e_i)\}.
\]
Let $|e_i|_\#$ be the interval $\big[0,\ell(e_i)\big]$ equipped with the subspace topology for the extended Euclidean topology on $\RR^{(k)}\sqcup\{\infty\}$. Choose a bijection between the endpoints of $\big[0,\ell(e_i)\big]$ and the vertices of $G$ incident to the edge $e_i$. We define a topological space
\[
|G(\underline \ell)|_\#\ :=\ \Big({\bigsqcup}_k |e_k|_\#\Big)\Big/\sim\ ,
\]
where the relation ``$\sim$" identifies the endpoints of $|e_i|_\#$ and $|e_j|_\#$ whenever the corresponding vertices are identified in the graph $G$. One can easily observe that the resulting topological space does not depend on the chosen orientation for the interval.
\end{construction}

	Each interval in $\RR^{(k)}\sqcup \{\infty\}$ is path connected in the extended Euclidean topology. Hence if $G$ is a connected graph, then $|G(\underline \ell)|_\#$ is path connected. 
	
	The follow construction will be used to prove our main connectivity result. Our strategy is to connect points of $X^\fH$ using Hahn analytifications of curves $C$ in $X$. Distinguished paths in $C^\fH$ will be identified by passing to an appropriate ``skeleton''.

\begin{construction}\label{remark: length assignment}
Suppose $\mathscr C$ is a proper, regular, marked semistable $R$-curve with generic fiber $C$ and special fiber $\mathscr C_0$. Assume that the fibers of $\mathscr C$ have no self intersections, and let $G$ be the marked dual graph of $\mathscr C_0$. The graph $G$ has one vertex for each irreducible component of $\mathscr C_0$ and each horizontal mark in $\mathscr{C}$, and $G$ has an edge between two vertices whenever the two corresponding components of $\mathscr{C}_{0}$ share a node, or whenever the horizontal mark corresponding to one of the vertices intersects the component of $\mathscr{C}_{0}$ corresponding to the other vertex. If $e_i$ is an edge corresponding to a node $p_i$ where two components of $\mathscr{C}_{0}$ intersect, then the local equation for $\mathscr C$ near $p_i$ is given by $x_iy_i = f_i$ for $f_i\in R$. Set $\ell(e_i) = \nu(f_i)$. If $e_i$ is an edge corresponding to the intersection of a horizontal mark with a component of $\mathscr{C}_{0}$, then set $\ell(e_i) = \infty$.
\end{construction}

As we now show, the dual graph of a model $\mathscr C$ above embeds naturally into the Hahn analytification of the generic fiber. The construction is a variant of the standard construction over rank-$1$ fields, see~\cite{BPR, MN12}. 

\begin{proposition}\label{prop: skeleton-embeds}
There is a continuous embedding $|G(\underline \ell)|_\#\hookrightarrow C^\fH_\#$. If $e$ is an edge corresponding to a marked point $p$ of the generic fiber $C$, the infinite point of $|e|_\#$ is mapped to the image of $p$ under the inclusion $C(K)\hookrightarrow C^\fH_\#$. 
\end{proposition}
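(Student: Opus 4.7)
The plan is to adapt the classical rank-one semistable skeleton construction \cite{BPR,MN12} to the $\RR^{(k)}$-valued setting. The strategy is to define a monomial valuation along each edge of $G$, check that these agree at vertices to produce a continuous map $\Phi\colon|G(\underline\ell)|_\# \to C^\fH_\#$, and then upgrade $\Phi$ to a topological embedding.

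For each interior edge $e_i$ associated to a node $p_i$ where two components of $\mathscr{C}_0$ meet, with local equation $x_iy_i = f_i$ and $\nu(f_i) = \ell(e_i)$, I would define $\phi_i\colon |e_i|_\# \to C^\fH_\#$ by sending $t \in [0,\ell(e_i)]$ to the monomial valuation $\val_t$ determined on an affine neighborhood $U = \spec A$ of $p_i$ by
\[
\val_t(g) \;=\; \min_{(a,b)\in\mathrm{supp}(g)}\bigl(\rho\circ\nu(c_{a,b}) + at + b(\ell(e_i) - t)\bigr),
\]
for any expression $g = \sum c_{a,b}\, x_i^a y_i^b$ with $ab = 0$. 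The prescription is well-defined modulo the relation $x_iy_i - f_i$ because $\val_t(x_iy_i) = t + (\ell(e_i)-t) = \ell(e_i) = \val_t(f_i)$, and the usual tropical multiplicativity argument goes through verbatim because $\RR^{(k)}$ is totally ordered. For an edge $e_j$ corresponding to a horizontal mark $\sigma\colon \spec R \to \mathscr{C}$ meeting a component $Z$ at a smooth point with local parameter $u$ cutting out $\sigma$, I would apply the analogous monomial prescription with $\val_t(u) = t \in [0,\infty]$. At $t = \infty$ the valuation kills $u$ and factors through the residue field of $C$ at the marked point $p = \sigma\cap C \in C(K)$; composing with $\rho\circ\nu$ recovers precisely the image of $p$ under $C(K)\hookrightarrow C^\fH_\#$, which settles the second assertion of the proposition.

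Next, I would show the edge maps glue. Each vertex $v$ of $G$ is either a component $Z$ of $\mathscr{C}_0$ or a horizontal mark. In the first case, every edge-map incident to $v$ evaluates at its $v$-endpoint to the divisorial valuation $\mathrm{ord}_Z$ on the function field of $C$ composed with $\rho\circ\nu$ on the residue field of $Z$; in the second case, the unique incident edge evaluates at its $\infty$-endpoint to the mark, as just observed. This yields a well-defined map $\Phi\colon|G(\underline\ell)|_\# \to C^\fH_\#$. Continuity is immediate from the definition of the Euclidean topology on $C^\fH_\#$: for each regular function $g$ on an affine open of $C$, the composite $t \mapsto \val_t(g)$ restricted to any single edge is a finite minimum of affine functions of $t$ with values in $\RR^{(k)}\sqcup\{\infty\}$, and such minima are continuous in the extended Euclidean topology.

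The main obstacle is to upgrade $\Phi$ from a continuous injection to a topological embedding. Injectivity on a single edge follows from the fact that $\val_t(x_i) = t$ in the interior case (respectively $\val_t(u) = t$ in the marked case) distinguishes different values of $t$; points lying on distinct edges are separated either by the divisorial valuations at their endpoints or by regular functions whose supports distinguish the nodes of $\mathscr{C}_0$. To see that $\Phi$ is an embedding, I would exhibit on each edge a finite tuple $g_1,\ldots,g_m$ of regular functions on a suitable affine open whose joint evaluation map $t \mapsto (\val_t(g_1),\ldots,\val_t(g_m))$ is a homeomorphism from $|e_i|_\#$ onto its image in $(\RR^{(k)}\sqcup\{\infty\})^m$ with the product extended Euclidean topology; for an interior edge the pair $(x_i,y_i)$ suffices, and for a marked edge one uses $u$ together with a function cutting out the adjacent component. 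Because $|G(\underline\ell)|_\#$ is the pushout of these edges along a finite discrete set of vertices and $\Phi$ is a continuous injection whose restriction to each edge is already an embedding, a direct patching argument then promotes $\Phi$ to a topological embedding on the whole graph.
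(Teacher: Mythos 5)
Your proposal is correct and takes essentially the same route as the paper: on each edge you define the monomial valuation in the local coordinates $x_i,y_i$ (resp.\ $u$), check the prescription descends along $x_iy_i=f_i$, glue at vertices via the divisorial valuations, verify continuity via the $\min$-of-affine-functions description, and obtain the embedding property by observing that evaluating valuations at the local coordinates provides a continuous inverse on the image. The only place you are slightly more cautious than the paper is in spelling out the well-definedness on the normal form with $ab=0$ and in flagging the patching step at the end; the paper passes over these points with "analogously to the rank-$1$ case," so your extra detail is welcome but not a deviation in method.
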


\begin{proof}
Let $\omega\in |G(\underline \ell)|_\#$ be a point lying in a subspace $|e|_\#\subset |G(\underline \ell)|_\#$ that corresponds to a node $p$ between two components of the special fiber $\mathscr{C}_{0}$. We build a point of $C^\fH_\#$ as follows. Since the fibers of $\mathscr C$ have no self-intersections, Zariski locally near $p$ the $R$-model $\mathscr C$ is of the form $\mathrm{Spec}_{\ \!}R[x,y]/(xy-f)$ where $\ell(e)=\nu(f)$. For each $g = \sum a_{jk} x^jy^k$, define
	$$
	\val_\omega(g)
	\ =\ 
	\min\ \!\big\{\ \!\nu(a_{jk})+j\ \!\omega+k\ \!\big(\ell(e)-\omega\big)\ \!\big\}.
	$$
Analogously to the rank-$1$ case, $\val_\omega$ defines a valuation on $R[x,y]/(xy-f)$ that extends to the field of rational functions $K(C)$. Similarly, if $e$ corresponds to a node where a marked section of $\mathscr{C}$ intersects $\mathscr{C}_{0}$, then one may choose local coordinates to describe $\mathscr C$ as $\spec_{\ \!}R[x]$, where the marked section is cut out by $x$. Again, given $\omega\in |e|_\#$, one may construct a monomial valuation $\val_\omega$ that takes each $g = \sum a_{j} x^j$ to
\[
	\val_\omega(g)
	\ =\ 
	\min\ \!\big\{\ \!\nu(a_{j})+j\ \!\omega\ \!\big\}.
\]
The resulting map $\iota: |G(\underline{\ell})|_{\#}\to C^{\fH}_{\#}$ is a continuous inclusion. To obtain the continuous inverse on the image of $\iota$, observe that each edge $|e|_\#$ corresponding to a node (resp. a marked point) of $\mathscr C_0$ the set $|e|_{\#}$ as a subspace of $(\RR^{(k)})^{2}$ (resp. $\RR^{(k)}\sqcup\{\infty\}$). The inverse map to $\iota$ on the image is given by evaluating a valuation at the coordinates in $R[x,y]/(xy-f)$ (resp. $R[x]$).
\end{proof}

Let $F$ be any Hahn valued extension of $K$.

\begin{proposition}
Let $C$ be a geometrically irreducible $F$-curve. Then the space $C^\fH_\#$ is path connected.
\end{proposition}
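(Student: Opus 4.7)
The plan is to exhibit paths in $C^{\fH}_{\#}$ by combining Proposition \ref{prop: skeleton-embeds} with a monomial retraction from the Hahn analytification onto a skeleton of a semistable model of $C$. Path connectivity of the skeleton itself follows from connectedness of the dual graph of the model and the fact that every skeleton edge is an interval in $\RR^{(k)}\sqcup\{\infty\}$, which is path connected in the extended Euclidean topology.

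First I would reduce to the case where $C$ is smooth and projective. If $C$ is singular, the finite surjection $\tilde C \to C$ from the normalization induces a continuous surjection $\tilde C^{\fH}_{\#} \twoheadrightarrow C^{\fH}_{\#}$, and a continuous surjective image of a path-connected space is path connected. If $C$ is not proper, embed $C$ into a smooth projective completion $\bar C$; the complement consists of finitely many closed points, and their preimages in $\bar C^{\fH}_{\#}$ are limit points at the tips of the infinite marked rays in the skeleton picture of Construction \ref{remark: length assignment}, so removing them leaves a space in which any path crossing such a tip may be perturbed off of it. Now assume that $C$ is smooth projective and geometrically irreducible. Extend $F$ to a Hahn valued field $F'$ whose residue field is algebraically closed and whose value group is all of $\RR^{(k)}$, and (after possibly a further finite extension, which is harmless since $\RR^{(k)}$ is divisible) arrange that $C_{F'}$ admits a proper regular semistable $R_{F'}$-model $\mathscr{C}$. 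The scalar-extension map $C^{\fH}_{F'\#} \to C^{\fH}_{\#}$ is continuous and surjective, so it suffices to prove path connectivity of $C^{\fH}_{F'\#}$. Since $C$ is geometrically irreducible, the special fiber $\mathscr{C}_{0}$ is connected, its dual graph $G$ is connected, and Proposition \ref{prop: skeleton-embeds} embeds the path-connected skeleton $|G(\underline \ell)|_{\#}$ into $C^{\fH}_{F'\#}$.

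The core step is to construct, for each $x \in C^{\fH}_{F'\#}$, a continuous path from $x$ to a point of the skeleton. If $x$ is $F'$-rational, the reduction of $\val_{x}$ selects a Zariski point $\xi_{x} \in \mathscr{C}_{0}$ on a component $E$, possibly at a node. On a Zariski neighborhood of $\xi_{x}$, write $\mathscr{C}$ in one of the local forms $\spec R_{F'}[u]$ or $\spec R_{F'}[u,v]/(uv - f)$ appearing in the proof of Proposition \ref{prop: skeleton-embeds}, and use the monomial family $\val_{\omega}$ constructed there, letting $\omega$ range over an interval from the coordinate value of $x$ up to the skeleton vertex $v_{E}$, or along the edge $|e|_{\#}$ in the nodal case. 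For general $x$, enlarge $F'$ further so that $x$ becomes rational and apply the previous construction. Concatenating a path $x \rightsquigarrow |G(\underline \ell)|_{\#}$, a path through the skeleton, and a path $|G(\underline \ell)|_{\#} \rightsquigarrow y$ produces the required path between any two points $x, y \in C^{\fH}_{F'\#}$; pushing this path down along the scalar-extension map and (if necessary) perturbing off of the removed cusp-tips completes the argument.

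The principal obstacle is verifying that the interpolating monomial family $\omega \mapsto \val_{\omega}$ genuinely defines a path that is continuous in the extended Euclidean topology: one must check that each evaluation function $\ev_{f}$ varies continuously with $\omega$, that the endpoint of the family at the coordinate value of $x$ actually recovers $\val_{x}$, and that these local retractions glue consistently as $\xi_{x}$ moves within $\mathscr{C}_{0}$ and as we pass to larger base extensions. In rank $1$ this reduces to the standard Berkovich retraction onto the skeleton of a semistable model, but in higher rank the extended Euclidean topology is more delicate than the extended order topology, and continuity must be verified directly from the Euclidean basis on $\RR^{(k)}\sqcup\{\infty\}$ rather than inherited from an order-theoretic argument.
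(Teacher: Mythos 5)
Your proposal hinges on a step you never actually carry out: building a continuous family $\omega\mapsto\val_\omega$ that retracts an arbitrary point $x\in C^\fH_\#$ onto the skeleton $|G(\underline\ell)|_\#$, and you explicitly flag verifying Euclidean continuity of this family as ``the principal obstacle.'' That is the gap. You have reduced the proposition to a retraction lemma that remains unproved, and you correctly observe that in higher rank the continuity cannot simply be borrowed from the rank-$1$ Berkovich retraction or from an order-topology argument. The paper avoids this difficulty entirely by a small but decisive change of strategy: rather than retracting an arbitrary point onto the skeleton, one first reduces (as you do) to two \emph{rational} points $z,w$ over a suitable Hahn extension $E$, and then one \emph{marks $z$ and $w$} when building the stable model $\mathscr C$ of $(\hat C_E,\mathscr P)$. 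Proposition~\ref{prop: skeleton-embeds} then says the infinite endpoint of the leg corresponding to each marked point maps to that marked point under $C(E)\hookrightarrow C^\fH_\#$. Hence $z$ and $w$ already lie \emph{on} the embedded skeleton $|G(\underline\ell)|_\#$, which is path connected because $G$ is connected and each edge $|e_i|_\#$ is an interval in the extended Euclidean topology; no retraction from $C^\fH_\#$ is ever needed.

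Two secondary points. First, the paper's reduction to rational points is tighter than yours: given $z\in C^\fH$, one takes $E$ to be the valued residue field of $\val_z$, so $z$ lifts to an $E$-point of $C_E$, and one connects $E$-points of $C_E^\fH$; your proposal enlarges $F$ to a maximally nice $F'$ and then ``enlarges further'' case by case, which is workable but more than is required. Second, your treatment of non-proper $C$ — ``removing them leaves a space in which any path crossing such a tip may be perturbed off of it'' — is not an argument. In the paper's setup this issue does not arise: the boundary points $\hat C_E\setminus C_E$ sit at the tips of dead-end legs of $G$, so a skeleton path from the $z$-leg tip to the $w$-leg tip never passes through a boundary tip, and its interior points are monomial valuations on the generic point, hence lie in $U^\fH$ for every nonempty open $U\subset C_E$. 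If you keep the retraction strategy you must actually prove the retraction is continuous in the $\#$-topology; otherwise, adopt the marking trick and the whole difficulty disappears.
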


\begin{proof} An arbitrary point $z$ of $C^\fH$ is represented by a valuation
	$$
	\val_z: \mathscr O_C(U)\lra \RR^{(k)}\sqcup \{\infty\}
	$$
for some Zariski open $U\subset C$. Let $E$ be the fraction field of $\mathscr O_C(U)\big/\val_z^{-1}(\infty)$, equipped with the valuation $\val_z$. By base changing to $E$, we obtain a continuous map $C^{\fH}_{E}:=(C\times_F E)^\fH\to C^\fH$ and an $E$-valued point $z'$ in $C_{E}$ mapping to $z$. Since $E$ is a valued field extension of $F$, the $F$-rational points of $C$ include into $C^{\fH}_{\!E}$. Thus it suffices to connect the $E$-rational points in $C^{\fH}_{\!E}$, for an arbitrary $\RR^{(k)}$-valued field extension $E$ of $F$.

Now consider two $E$-valued points $z$ and $w$ in $C_{E}$. As the normalization map is surjective, we reduce to the case that $C_{E}$ is smooth. Choose a compactification $\hat C_{E}$ of $C_{E}$, and a collection $\mathscr P = \{p_i\}$ of distinct points of $\hat C_{E}$ such that $x,y\in \mathscr P$ and the marked curve $(\hat C,\mathscr P)$ is stable. By the valuative criterion for properness of $\Mbar_{g,n}$ (possibly after a finite base change) we may take the stable marked model $\mathscr C$ of $(\hat{C}_{E},\mathscr P)$ over the valuation ring $R_E$ of $E$. After blowup, we may assume that the components of the special fiber of $\mathscr C$ do not have self-intersection. Now Proposition~\ref{prop: skeleton-embeds} above furnishes a path connected subspace of $C^\fH_{\!E\#}$ that contains the points $z$ and $w$ and the result follows.
\end{proof}

\begin{proof}[Proof of Theorem~\ref{thm: xH-euclidean-path-connectivity}]
As in the proof of the proposition above, we may reduce the claim to a proof that if $E$ is any Hahn valued field extending $K$, then for any two $E$-rational points $z,w\in X_{E}$, there exists a path in $X^{\fH}_{\!E\#}$ connecting $z$ and $w$. Furthermore, we may assume that $E$ is algebraically closed. Then by Bertini's theorem, as stated in~\cite[p. 53]{AbVar}, the points $z$ and $w$ lie on an irreducible $E$-curve $C$ in $X$. We may apply the above proposition to find a path connecting $z$ and $w$. Projecting to $X^\fH_{\#}$, the result follows.
\end{proof}

\begin{proof}[Proof of Theorem~\ref{thm: tropical-theorem}]
By definition, $\trop_{\#}(X)$ comes with a surjective continuous map $\trop:X_\#^{\fH}\twoheadrightarrow\trop_{\#}(X)$. Thus path connectivity of $\trop_{\#}(X)$ follows immediately from Theorem \ref{thm: xH-euclidean-path-connectivity}.
\end{proof}

\subsection{Connections to Banerjee's tropicalizations}\label{sec: banerjee} Recall~\cite{HLF} that a higher local field $K$ has value group isomorphic to $\ZZ^{(k)}$, and that its algebraic closure $K^{\mathrm{alg}}$ has value group isomorphic $\mathbb{Q}^{(k)}$. Given a subvariety $X$ of a split, $d$-dimensional algebraic torus over $K$, Banerjee \cite{Ban13} defines the tropicalization of $X$ to be the subset of $\RR^{k\times d}$ obtained by taking Euclidean closure of the image of the coordinatewise valution map $X(K^{\mathrm{alg}})\to \RR^{k\times d}$. In the present section, we refer to the resulting topological space as {\em Banerjee's tropicalization}, and denote it $\trop_{\rm Ban}(X)$. 

	The spaces $\mathrm{trop}_\#(X)$ and $\trop_{\rm Ban}(X)$ do not coincide. The essential reason for this is the following. A {\em half-space} in $(\RR^{(k)})^d$ is any subset
$$
H = \big\{\bm r = (\underline r_1,\ldots, \underline r_d)\in (\RR^{(k)})^d: u_{1}\underline{r}_{1}+\cdots+u_{d}\underline{r}_{d}+\gamma\geq 0\big\},
$$
for $u=(u_{1},\dots,u_{d})\in \ZZ^d$ and $\gamma\in \RR^{(k)}$. A halfspace is closed in the product-order topology on $(\RR^{(k)})^d$, but is not closed in the Euclidean topology on $\RR^{k\times d}$. For instance, consider the halfspace
$$
H' = \big\{\underline r\in \RR^{(2)}: \underline r\geq (0,0)\big\}.
$$
It coincides with the subset of $\RR^{(2)}$ given by the right-half plane, minus the (open) vertical negative axis, and thus it is not closed in $\RR^{2}$ (see Figure \ref{two possible closures}).
\begin{figure}[h!]
    \definecolor{sepia}{rgb}{.35,0,0}
    \definecolor{navy}{rgb}{0,0,0.5}
    \ \ \ \scalebox{.9}{$
    \begin{xy}
    (0,0)*+{
        \begin{tikzpicture}
        \fill[blue!20] (-1.8,-1.8) -- (1.8,-1.8) -- (1.8,1.8) -- (-1.8,1.8) -- (-1.8,-1.8);
        \fill[black!25!blue!35] (0,-1.8) -- (1.8,-1.8) -- (1.8,1.8) -- (0,1.8) -- (0,-1.8);
        \draw[black!65!blue, very thick,->] (0,0) -- (0,1.8);
        \draw[black!65!blue, very thick, dashed] (0,-1.8) -- (0,0);
        \fill[black!65!blue] (0,0) circle (.11);
        \fill[white] (0,0) circle (.075);
        \end{tikzpicture}
    };
    (9,10)*+{\mbox{{\smaller\smaller {\color{black} $\big\{\underline{r}\!>\!(0,0)\big\}$}}}};
    (-5,0)*+{\mbox{{\smaller\smaller (0,0)}}};
    (-10,14)*+{\mbox{{\color{navy} $\mathbb{R}^{(2)}$}}};
    \end{xy}
    $}    
\caption{{\smaller The half-space $H'=\big\{\underline{r}\in\mathbb{R}^{(2)}:\underline{r}>(0,0)\big\}$ has a larger closure in  the Euclidean topology on $\mathbb{R}^{2}$ than it does in the order topology on $\mathbb{R}^{2}$.}}
\label{two possible closures}    
\end{figure}
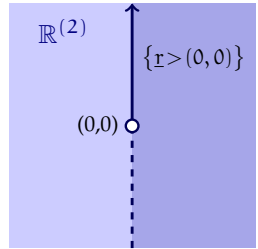

	For an explicit example of how the tropicalizations differ, let $X = V(x+y+1)$ in the $2$-dimensional torus. Then $\mathrm{trop}(X)$ consists of $3$ copies of the halfspace $H'$ glued at a their origins, whereas $\trop_{\rm Ban}(X)$ consists of three copies of the set $\big\{(r_{1},r_{2})\in\mathbb{R}^{2}:r_{1}\ge 0\big\}$ glued together at their origins.
	
	Despite the fact that they do not coincide, the two tropicalizations are related by Proposition \ref{proposition: Banerjee comparison} below. Together with Theorem~\ref{thm: tropical-theorem}, this resolves a question posed by Banerjee in~\cite[p. 2]{Ban13}.

\begin{proposition}\label{proposition: Banerjee comparison}
Let $K$ be a higher local field of rank $k$, and let $X$ be a subvariety of a $d$-dimensional split algebraic torus $T$ over $K$. Then $\trop_{\rm Ban}(X)$ is the closure of $\trop_\#(X)$ in the Euclidean topology. 
\end{proposition}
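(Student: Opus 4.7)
The proof splits into two inclusions, using that $\trop_{\rm Ban}(X)$ is Euclidean-closed by construction. For the inclusion $\trop_{\rm Ban}(X)\subseteq\overline{\trop_\#(X)}^{\mathrm{Eucl}}$, I would observe that any $y\in X(K^{\mathrm{alg}})$ induces a point of $X^{\fH}$ by composing evaluation at $y$ with the valuation $\rho\circ\nu$ on $K^{\mathrm{alg}}$, whose value group $\QQ^{(k)}$ embeds into $\RR^{(k)}$. The tropicalization of this point is precisely the coordinatewise valuation of $y$. Hence the set of coordinatewise valuations of $K^{\mathrm{alg}}$-points is contained in $\trop_\#(X)$, and passing to Euclidean closures yields the claim.

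For the reverse inclusion, it suffices to show $\trop_\#(X)\subseteq\trop_{\rm Ban}(X)$, since $\trop_{\rm Ban}(X)$ is Euclidean-closed by definition. Fix $p\in\trop_\#(X)$ and a Euclidean neighborhood $U$ of $p$ in $\RR^{kd}$; the goal is to produce some $y\in X(K^{\mathrm{alg}})$ whose coordinatewise valuation lies in $U$. The plan is to induct on $k$, using the tower of projections from Section~\ref{sec: berk-comparison}. The base case $k=1$ is the classical fundamental theorem of tropical geometry of Bieri--Groves and Einsiedler--Kapranov--Lind, which identifies the rank-$1$ tropicalization with the Euclidean closure of coordinate valuations of $K^{\mathrm{alg}}$-points. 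For the inductive step, decompose $p=(p^{(1)},p^{(2)})\in\RR^d\times(\RR^{k-1})^d$ according to $\RR^{(k)}=\RR\times\RR^{(k-1)}$. The leading component $p^{(1)}=\pi_1^k(p)$ lies in the classical Berkovich tropicalization $\trop_1(X)\subseteq\RR^d$ and may be approximated by the rank-$1$ coordinate valuation of some $K^{\mathrm{alg}}$-point. The residue field $\overline{K}$ with respect to the rank-$1$ valuation is itself a higher local field of rank $k-1$, and the tail $p^{(2)}$ lies in the rank-$(k-1)$ Hahn tropicalization of an appropriate initial degeneration $\mathrm{in}_w(X)$ over $\overline{K}$, for $w$ the rank-$1$ weight determined by $p^{(1)}$. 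The inductive hypothesis applied to $\mathrm{in}_w(X)$ yields an $\overline{K}^{\mathrm{alg}}$-point whose coordinatewise rank-$(k-1)$ valuation approximates $p^{(2)}$ in the Euclidean topology on $\RR^{(k-1)d}$, and a Hensel-style lift of this residue point produces a $K^{\mathrm{alg}}$-point of $X$ with the desired simultaneous approximation of both components of $p$.

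The chief obstacle lies in the coherence of the two-level approximation. One must arrange the rank-$1$ weight $w$ to be rational enough that $\mathrm{in}_w(X)$ is defined over $\overline{K}$ and that $p^{(2)}$ genuinely lies in its rank-$(k-1)$ Hahn tropicalization, and one must further verify that the Hensel lift does not spoil the leading-coordinate approximation. An appealing alternative that sidesteps this delicate bookkeeping is to invoke the inverse-limit Theorem~\ref{thm: inverse-limit}, which expresses $X^{\fH}$ as a projective limit of classical tropicalizations under toric re-embeddings; combined with rank-$1$ algebraic-point density at each finite stage, this would yield Euclidean density of coordinate valuations of $K^{\mathrm{alg}}$-points in $\trop_\#(X)$ directly, bypassing the need for an explicit lifting of initial degenerations.
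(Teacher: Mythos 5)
Your first inclusion is fine: a $K^{\mathrm{alg}}$-point of $X$ determines a point of $X^{\fH}$ whose tropicalization is its coordinatewise valuation, so the valuation image of $X(K^{\mathrm{alg}})$ lies in $\trop_\#(X)$ and closures are compared in the right direction. The problem is the reverse inclusion, which you never actually establish. Your inductive scheme has two genuine gaps that you yourself flag but do not close. First, the claim that the tail $p^{(2)}$ lies in the rank-$(k-1)$ Hahn tropicalization of $\mathrm{in}_{w}(X)$ is not a formality; it is a higher rank initial degeneration/Gr\"obner structure theorem that would itself require an argument (essentially the content of \cite[Prop.\ 1.2]{NS11}). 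Second, the lifting of an $\overline{K}^{\mathrm{alg}}$-point of $\mathrm{in}_{w}(X)$ to a $K^{\mathrm{alg}}$-point of $X$ with prescribed leading and tail valuations is exactly the hard part of the fundamental theorem of tropical geometry; Hensel's lemma alone does not control the valuations, and you do not explain how the two levels of approximation are made simultaneously small. Your proposed alternative via Theorem~\ref{thm: inverse-limit} also does not work as stated: that inverse limit is indexed by toric re-embeddings of $X$ at \emph{fixed} rank $k$, so each finite stage is still a rank-$k$ tropicalization, and invoking ``rank-$1$ algebraic-point density at each finite stage'' conflates the inverse-limit system of Section~\ref{ref: tropicalization} with the rank-reducing projection tower of Section~\ref{sec: berk-comparison}.

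The paper's proof sidesteps all of this by citing the higher rank structure results of \cite{NS11} directly: $\trop(X)$ is a union of $\QQ^{(k)}$-rational polyhedra, and \cite[Prop.\ 1.1]{NS11} identifies the $\QQ^{(k)}$-rational points of $\trop(X)$ exactly with the valuation image of $X(K^{\mathrm{alg}})$. Since the points of a $\QQ^{(k)}$-rational polyhedron with all coordinates in $\QQ$ are Euclidean-dense in that polyhedron, the valuation image of $X(K^{\mathrm{alg}})$ is Euclidean-dense in $\trop_\#(X)$, and both closures agree. If you want to salvage your argument, you would in effect be re-proving those two propositions of \cite{NS11}; it is cleaner to cite them.
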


\begin{proof}
Choose a splitting of the torus $T$, inducing an isomorphism of $\Hom(M,\RR^{(k)})$ with $(\RR^{(k)})^d$. Let $P$ be a $\QQ^{(k)}$-rational polyhedron in $(\RR^{(k)})^d$. Identify $(\RR^{(k)})^d$ with the (unordered) abelian group $\RR^{kd}$, and consider $P$ as a Euclidean subset of $\RR^{kd}$. The set of points of $P$ with coordinates in $\QQ$ is dense in $P$. The value group of the algebraic closure of $K$ is $\QQ^{(k)}$, and thus $|\mathrm{trop}(X)|$ is a union of $\QQ^{(k)}$-rational polyhedra~\cite[Proposition 1.2, Remark 1.3]{NS11}. This implies that the set $|\mathrm{trop}(X)(\QQ)|$ of points of $\trop_\#(X)$ with coordinates in $\QQ$ is dense in $\mathrm{trop}_\#(X)$. It follows from~\cite[Proposition 1.1]{NS11} that $|\mathrm{trop}(X)(\QQ)|$ is precisely the image of $X(K^{\mathrm{alg}})$. The result follows.
\end{proof}

\begin{corollary}
If $X$ is a geometrically connected subvariety of a split algebraic $K$-torus, then $\trop_{\mathrm{Ban}}(X)$ is path connected. 
\end{corollary}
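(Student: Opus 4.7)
The plan is to combine Theorem~\ref{thm: tropical-theorem} (path connectivity of $\trop_\#(X)$) with Proposition~\ref{proposition: Banerjee comparison}, which identifies $\trop_{\mathrm{Ban}}(X)$ with the Euclidean closure of $\trop_\#(X)$ inside $\RR^{kd}$, and then to exploit the polyhedral structure of $\trop_\#(X)$ in order to extend path connectivity to this closure. Since the Euclidean closure of a path connected subspace of $\RR^{kd}$ need not be path connected in general, the polyhedral structure is doing essential work.

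The first step is to recall, as already used in the proof of Proposition~\ref{proposition: Banerjee comparison}, that $\trop_\#(X)$ is a finite union $\bigcup_{i=1}^{N} P_i$ of $\QQ^{(k)}$-rational polyhedra in $(\RR^{(k)})^d$, each $P_i$ cut out by finitely many lex half-space inequalities of the shape $u\cdot \bm r + \gamma \geq_{\mathrm{lex}} 0$. A direct verification shows that if $a,b\in \RR^k$ are both $\geq_{\mathrm{lex}} 0$, then so is every convex combination $\lambda a + (1-\lambda)b$: at the smallest index at which $a$ or $b$ has a nonzero (necessarily positive) entry, both have nonnegative entries with at least one strictly positive, and all earlier entries vanish. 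Consequently each lex half-space is Euclidean-convex in $\RR^k$, each $P_i$ is Euclidean-convex in $\RR^{kd}$, and the Euclidean closure $\overline{P_i}$ is then a classical closed convex polyhedron. Because Euclidean closure commutes with finite union, $\trop_{\mathrm{Ban}}(X) = \bigcup_i \overline{P_i}$.

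To conclude, choose a basepoint $p_i \in P_i \subseteq \trop_\#(X)$ for every nonempty $P_i$. Given any $q \in \trop_{\mathrm{Ban}}(X)$, pick an index $i$ with $q \in \overline{P_i}$; by convexity of $\overline{P_i}$, the Euclidean straight-line segment from $q$ to $p_i$ lies entirely in $\overline{P_i} \subseteq \trop_{\mathrm{Ban}}(X)$, giving a path from $q$ into $\trop_\#(X)$. Concatenating two such retraction segments with a path in $\trop_\#(X)$ furnished by Theorem~\ref{thm: tropical-theorem} then produces a path in $\trop_{\mathrm{Ban}}(X)$ between any prescribed pair of points. The one technical ingredient is the convexity check for lex half-spaces; the rest is routine topology.
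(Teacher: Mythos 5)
Your proof is correct, and it addresses a genuine gap that the paper glosses over. The corollary appears in the paper with no proof at all, as though it followed immediately from Theorem~\ref{thm: tropical-theorem} and Proposition~\ref{proposition: Banerjee comparison}; but, as you observe, the Euclidean closure of a path-connected subset of $\RR^{kd}$ need not be path-connected, so \emph{some} argument is required. Your key lemma---that the lexicographic half-space $\{r\in\RR^{k}:r\geq_{\mathrm{lex}}0\}$ is Euclidean-convex---is correct: for $\lambda\in(0,1)$ and $a,b\geq_{\mathrm{lex}}0$ not both zero, all coordinates of $\lambda a+(1-\lambda)b$ vanish before the first index $j$ at which $a$ or $b$ is nonzero, and at that index $\lambda a_{j}+(1-\lambda)b_{j}>0$ since both summands are nonnegative and at least one is strictly positive. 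Since half-spaces $\{r:u\cdot r+\gamma\geq_{\mathrm{lex}}0\}$ are affine preimages of that set, each polyhedron $P_{i}$ in the finite decomposition of $\trop_{\#}(X)$ (finite by Theorem~\ref{thm: polyhedral-complex}) is Euclidean-convex, its closure $\overline{P_{i}}$ is convex, and $\trop_{\mathrm{Ban}}(X)=\bigcup_{i}\overline{P_{i}}$ because closure commutes with finite unions. The concatenation step---a straight segment from $q$ to a basepoint $p_{i}\in P_{i}$, a path in $\trop_{\#}(X)$ from $p_{i}$ to $p_{i'}$ supplied by Theorem~\ref{thm: tropical-theorem}, and a segment to $q'$---is exactly right. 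In short, you have supplied the argument the paper left implicit, and you were correct that the polyhedral/convexity structure is doing essential work rather than being a decorative detail.
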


\subsection{Definability}\label{sec: definability} We require certain rudimentary notions from model theory. We give a brief and self-contained treatment, sufficient for our purposes. For a more detailed introduction and an overview of the work of Hrushovski and Loeser~\cite{HL10}, see~\cite{Duc2}. For background on $o$-minimal and definable structures, see~\cite[Chapter 3]{vdD98}.

	The composition $\rho\circ\nu:K^\times\to \RR^{(k)}$ is a valuation on $K$. Let $K^{\textrm{alg}}$ denote an algebraically closed field extending $K$, together with a valuation $\nu_{K^{\textrm{alg}}}:(K^{\textrm{alg}})^\times\to \RR^{(k)}$ extending $\rho\circ\nu$. Such an extension always exists since $\RR^{(k)}$ is divisible~\cite[Theorem 3.2.4]{EP05}. Denote by $\mathbf K$ the pair\footnote{As in~\cite[Secion 14]{HL10}, we view $\mathbf K$ as a substructure of the theory of algebraically closed valued fields, in valued field and value group sorts.} $(K^{\textrm{alg}},\RR^{(k)})$.

	Let $\mathbf{ACVF}$ be the category in which an object is any algebraically closed field $F$ extending $K^{\textrm{alg}}$ and equipped with a valuation $\nu_F:F^\times\to\RR^{(k)}$ extending the valuation on $K^{\mathrm{alg}}$. Morphisms are given by isometric embeddings over $K$.

	Let $X$ and $Y$ be finite-type $K$-schemes. A subfunctor $D$ of $X$ is called \textit{$\mathbf{K}$-definable} if it can be defined Zariski locally in $X$ by a (finite) boolean combination of inequalities of the form $\nu(f)\bowtie \lambda+\nu(g)$ in $\RR^{(k)}$, where $f$ and $g$ are regular functions, $\lambda\in \RR^{(k)}\sqcup \{\infty\}$, and $\bowtie\ \in \{\leq, \geq,<,>\}$. 
	
	A subset of $X(F)$ is said to be $\mathbf K$-definable if it can be defined Zariski locally in $X$ by a (finite) boolean combination of inequalities of the form $\nu(f)\bowtie \lambda+\nu(g)$. When $k=1$, this reduces to the familiar notion of a semi-algebraic subset of $X(F)$. Similarly, one defines $\mathbf K$-definable maps $X(F)\to Y(F)$ by requiring the graph to be a definable subset of the product.

\begin{definition}\label{definition: definable functors}
A functor $D:\mathbf{ACVF}\to \mathbf{Sets}$ is {\em $\mathbf{K}$-definable} if it is isomorphic to a quotient $E/R$ of a $\mathbf{K}$-definable subfunctor $E\subset X$ of the functor of points $X$ of some $K$-scheme by an equivalence relation $R\subset E\times E$ that is itself a $\mathbf{K}$-definable subfunctor of $X\times X$.
	
	A natural transformation $\varphi: D\to E$ between $\bold{K}$-definable functors is said to be {\em $\mathbf{K}$-definable} if the graph of $\varphi$ is a $\mathbf{K}$-definable functor. 
\end{definition}

If $\Delta$ is a $\mathbf K$-definable functor, for every $F\in \mathbf{ACVF}$, a subset of $\Delta(F)$ is a {\em $\mathbf K$-definable subset} if it can be written as $D(F)$ for some $\mathbf K$-definable subfunctor of $\Delta$. 

\subsection{Model theoretic connectivity} In this subsection, the choice of $\mathbf K$ will be implicit in the discussion, and we will use the term \textit{definable} in place of \textit{$\mathbf K$-definable}. 

There is a natural pairing
\begin{eqnarray*}
\langle-,-\rangle\ :\ \big(\RR^{(k)}\sqcup \{\infty\}\big)^d\times \ZZ^d&\lra& \RR^{(k)}\sqcup \{\infty\}\\
\big(\ \!(\gamma_1,\ldots, \gamma_d),\ \!(u_1,\ldots, u_d)\ \!\big)&\longmapsto& \sum u_i\gamma_i.
\end{eqnarray*}

\noindent
A \textit{rational halfspace} in $\big(\RR^{(k)}\sqcup\{\infty\}\big)^d$ is any set of the form
	$
	H_{\underline\gamma,\delta}\ :=\ \big\{\underline\gamma\in \big(\RR^{(k)}\sqcup\{\infty\}\big)^d:\langle \underline\gamma,u\rangle\geq \delta\big\},
	$
for fixed slope $u\in \ZZ^d$ and affine constraint $\delta\in \RR^{(k)}\sqcup \{\infty\}$. Its {\em boundary} is $ \big\{\underline\gamma\in \big(\RR^{(k)}\sqcup\{\infty\}\big)^d:\langle \underline\gamma,u\rangle=\delta\big\}$. A {\em rational polyhedron} $P$ in $\big(\RR^{(k)}\sqcup\{\infty\}\big)^d$ is any intersection of finitely many rational halfspaces
	\begin{equation}\label{equation: halfspace intersection}
	P\ =\ H_{\underline\gamma_{1},\delta_{1}}\cap\cdots\cap H_{\underline\gamma_{m},\delta_{m}}.
	\end{equation}
If $P$ is a rational polyhedron given by an intersection (\ref{equation: halfspace intersection}), then a {\em face} of $P$ is any intersection of $P$ with any number of the boundaries of the rational halfspaces in (\ref{equation: halfspace intersection}). A {\em rational polyhedral complex} in $\big(\RR^{(k)}\sqcup\{\infty\}\big)^d$ is a finite a collection $\{P_{j}\}_{j\in J}$ of polyhedra in $\big(\RR^{(k)}\sqcup\{\infty\}\big)^d$ such that every face of every $P_{j}$ also lies in the collection, and such that the intersection of any two polyhedra in the collection again lies in the collection.
	
	The following proposition follows from the discussion in~\cite[Section 1.4]{Duc2}.

\begin{proposition}
A subset of $\big(\RR^{(k)}\sqcup\{\infty\}\big)^d$ is definable if and only if it is isomorphic to a finite union of rational polyhedra. In particular, the set underlying any rational polyhedral complex in $\big(\RR^{(k)}\sqcup\{\infty\}\big)^d$ is a definable subset of $\big(\RR^{(k)}\sqcup\{\infty\}\big)^d$.
\end{proposition}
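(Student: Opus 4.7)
The plan is to prove the two implications separately, with the easy direction first.

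For the \emph{polyhedral implies definable} direction, I would observe that a rational halfspace $H_{u,\delta}\subset(\RR^{(k)}\sqcup\{\infty\})^{d}$ is immediately $\mathbf{K}$-definable. Thinking of the coordinates $\gamma_{i}$ as valuations of the standard characters on a $d$-dimensional torus, the defining condition $\langle\underline\gamma,u\rangle\ge\delta$ with $u\in\ZZ^{d}$ and $\delta\in\RR^{(k)}\sqcup\{\infty\}$ is precisely an inequality of the form $\nu(\chi^{u})\ge\delta$ allowed in the definition of $\mathbf{K}$-definability from Section~\ref{sec: definability}. Since $\mathbf{K}$-definability is closed under finite boolean combinations, every rational polyhedron, and hence every finite union of rational polyhedra, is $\mathbf{K}$-definable. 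The second sentence of the proposition follows at once from this together with the definition of a rational polyhedral complex.

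For the \emph{definable implies polyhedral} direction, the key input is quantifier elimination in the value group sort of $\mathbf{ACVF}$. The group $\RR^{(k)}\sqcup\{\infty\}$ is a divisible ordered abelian group together with an absorbing top element, and by classical QE for divisible ordered abelian groups (enriched by constants from $\RR^{(k)}\sqcup\{\infty\}$), every $\mathbf{K}$-definable subset of $(\RR^{(k)}\sqcup\{\infty\})^{d}$ is cut out by a quantifier-free formula. Writing this formula in disjunctive normal form exhibits the set as a finite union of cells, each defined by a conjunction of atomic conditions of the form $\sum u_{i}\gamma_{i}\bowtie \delta$ with $u_{i}\in\ZZ$, $\delta\in\RR^{(k)}\sqcup\{\infty\}$, and $\bowtie\in\{=,\le,\ge,<,>\}$.

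The main obstacle is that such cells are only locally closed, so they are not closed rational polyhedra on the nose, and one must also handle the points at $\infty$ carefully. To remedy this, I would stratify $(\RR^{(k)}\sqcup\{\infty\})^{d}$ according to which coordinates equal $\infty$ and which lie in $\RR^{(k)}$; on each stratum the atomic conditions become honest rational-linear (in)equalities in the ordered group $\RR^{(k)}$. I then invoke the standard polyhedral fact that a set in $(\RR^{(k)})^{j}$ cut out by finitely many rational-linear (in)equalities is a finite disjoint union of relative interiors of closed rational polyhedra, and, allowing the isomorphism permitted by the statement, can be repackaged as a finite union of closed rational polyhedra. Reassembling across strata produces the required decomposition of the original definable subset of $(\RR^{(k)}\sqcup\{\infty\})^{d}$. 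Since the decomposition step is essentially combinatorial bookkeeping on top of the QE input, I would defer to~\cite[\S1.4]{Duc2} for the detailed treatment, as the statement of the proposition already indicates.
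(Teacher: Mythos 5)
The paper's own proof is a one-sentence citation to \cite[Section 1.4]{Duc2}, so your proposal supplies substantially more scaffolding than the text itself. Your easy direction is exactly right, and your hard direction (quantifier elimination in the value-group sort, passage to disjunctive normal form, stratification by the locus of $\infty$-coordinates) is the correct strategy. Note also that invoking QE is genuinely warranted here: the paper's functorial notion of $\mathbf{K}$-definability permits quotients of subfunctors of schemes, so a definable subset of $\big(\RR^{(k)}\sqcup\{\infty\}\big)^{d}$ is a priori presented by a formula carrying quantifiers over the valued-field sort, and elimination is what reduces it to a boolean combination of linear inequalities in the $\gamma_{i}$.

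The one place where I would ask you to slow down is the final ``repackaging'' claim. A cell cut out by a conjunction of rational-linear inequalities, some of them strict, is a relatively open or half-open polyhedron; with the paper's definition of a rational polyhedron as a finite intersection of $\ge$-halfspaces (hence closed), there is in general no semilinear bijection from such a cell onto a finite union of closed polyhedra. For instance, in $\RR^{(k)}$ the bounded open cell $\{\gamma:0<\gamma<1\}$ cannot be taken to any finite union of closed rational polyhedra by a piecewise rational-affine bijection: boundedness and the absence of the two endpoints are semilinear invariants that obstruct this. So either ``isomorphic'' in the proposition must be understood in a sense you have not pinned down, or the class of rational polyhedra should be tacitly enlarged to allow strict halfspaces. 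Your instinct to flag this issue and defer to Ducros is the right one, but as written the repackaging step is a gap.
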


The analogous statements hold upon replacing $\RR^{(k)}\sqcup \{\infty\}$ with $\RR^{(k)}$. 

\begin{theorem}\label{thm: polyhedral-complex}
If $X$ is a subvariety of a $d$-dimensional torus over $K$, then $\trop(X)$ carries the structure of a rational polyhedral complex. In particular, $\trop(X)$ is a definable subset of $(\RR^{(k)})^d\cong \Hom_{\ZZ}(M, \RR^{(k)})$. 
\end{theorem}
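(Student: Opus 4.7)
The plan is to extend the classical rank-$1$ Bieri--Groves theorem to rank $k$ by inducting along the tower of projections (\ref{projective relationship}). First I would reduce to the case where $K$ is algebraically closed and $\rho$ is injective: since $\RR^{(k)}$ is divisible, the composite valuation $\rho\circ\nu$ extends to an $\RR^{(k)}$-valued valuation on $K^{\mathrm{alg}}$, and the definition of the Hahn analytification in terms of valued field extensions makes $\trop(X)$ invariant under such a base change. Replacing $\Gamma$ by $\rho(\Gamma)\subseteq\RR^{(k)}$ then lets us assume $\rho$ is injective as well.

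Next I would prove the hypersurface case directly. For $f=\sum_u a_u\chi^u\in K[M]$, a straightforward unwinding of the definition of $\trop$ shows that
\[
\trop\bigl(V(f)\bigr)\ =\ \bigl\{\underline r\in(\RR^{(k)})^d\ :\ \min_u\{\nu(a_u)+\langle u,\underline r\rangle\}\ \text{is attained at least twice}\bigr\}.
\]
Since only finitely many coefficients $a_u$ are nonzero, this locus is a finite union of sets, each defined by a single equation $\nu(a_u)+\langle u,\underline r\rangle=\nu(a_v)+\langle v,\underline r\rangle$ together with finitely many inequalities $\nu(a_u)+\langle u,\underline r\rangle\leq\nu(a_w)+\langle w,\underline r\rangle$ for the remaining $w$. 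This exhibits $\trop(V(f))$ as a $\rho(\Gamma)$-rational polyhedral complex, pure of codimension one in $(\RR^{(k)})^d$.

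The general case is then a tropical basis statement: one must produce finitely many $f_1,\dots,f_N\in I(X)$ with $\trop(X)=\bigcap_i\trop(V(f_i))$. I would establish this by induction on $k$ along the tower from Section~\ref{sec: berk-comparison}. The base case $k=1$ is the classical Bieri--Groves theorem. For the inductive step, the projection $\trop(X)\to\trop_{k-1}(X)$ has polyhedral image by the induction hypothesis; over the relative interior of each face $\sigma$ of that image, the fiber is the rank-$1$ tropicalization of the initial degeneration $\mathrm{in}_\sigma(X)$, a closed subvariety of a torus over a graded ``residue field''. Applying rank-$1$ Bieri--Groves to this initial degeneration shows the fiber is a rational polyhedral complex, and checking compatibility along boundaries of faces glues these fibers into the required polyhedral structure on $\trop(X)$.

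The ``in particular'' assertion is then immediate from the preceding proposition, which identifies definable subsets of $(\RR^{(k)})^d$ with finite unions of rational polyhedra. I expect the main obstacle to be the inductive gluing step: one must carefully define initial degeneration with respect to higher-rank weights, verify that the resulting family of fiber polyhedra varies in a definable way across faces of the base complex, and argue that the total space assembles into a genuine polyhedral complex in $(\RR^{(k)})^d$ rather than just a set-theoretic union.
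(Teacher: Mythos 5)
The paper's proof is a one-line citation: it simply invokes \cite[Proposition 1.2, Remark 1.3]{NS11}, where the result is established for valued fields of arbitrary rank by homogenizing, embedding the torus in $\PP^d$, and using the Gr\"obner complex to cut $\trop(X)$ into polyhedra. Your proposal takes a genuinely different route, attempting to derive the higher-rank statement from the rank-$1$ Bieri--Groves theorem by inducting along the tower of projections $X^{\fH}_{k}\to X^{\fH}_{k-1}\to\cdots$. So the comparison is: the paper outsources the polyhedral structure to a single uniform Gr\"obner-complex argument that makes no reference to rank, whereas you try to bootstrap from the rank-$1$ case one convex subgroup at a time. Your approach, if it went through, would have the advantage of explaining the higher-rank structure in terms of the classical rank-$1$ theory; the paper's approach has the advantage of being complete and uniform.

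However, as written your argument has genuine gaps, several of which you flag yourself. First, the hypersurface step invokes a higher-rank Kapranov theorem: the inclusion of the corner locus into $\trop(V(f))$ requires constructing, for every $\underline r$ at which the tropical minimum is achieved twice, a valued extension $L$ of $K$ and a point of $V(f)(L)$ with the prescribed coordinate-wise valuation. This is plausible because the definition of $X^\fH$ allows arbitrary Hahn-valued extensions, but it is not a ``straightforward unwinding of the definition'' and needs to be proved. Second, and more seriously, the inductive step is not a proof. You announce the general case as a tropical-basis statement ($\trop(X)=\bigcap_i\trop(V(f_i))$) but then pivot to a different fiber-wise argument without ever producing the finite set $\{f_i\}$; the two are not obviously equivalent and in the rank-$1$ literature the existence of a tropical basis is itself usually deduced from the Gr\"obner-complex machinery you are trying to avoid. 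Third, the fiber-wise argument requires developing a theory of initial degenerations with respect to $\RR^{(k-1)}$-valued weight cones, identifying the fiber of $\trop_k(X)\to\trop_{k-1}(X)$ over the relative interior of a face $\sigma$ with the rank-$1$ tropicalization of $\mathrm{in}_\sigma(X)$ over an appropriate residue field carrying the residual rank-$1$ valuation, and then gluing the resulting family of fiber complexes over the boundaries of the faces of $\trop_{k-1}(X)$ into a single polyhedral complex in $(\RR^{(k)})^d$. None of these steps is carried out, and the last one (the gluing) is exactly where the difficulty lies: it is not automatic that the union of a polyhedral family over a polyhedral base is a polyhedral complex rather than just a definable set. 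Unless you can fill in these steps, the argument does not yet constitute a proof, and citing \cite{NS11} as the paper does is the more economical route.
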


\begin{proof}
This is proved for valued fields of arbitrary rank in~\cite[Proposition 1.2, Remark 1.3]{NS11} by choosing an embedding of the torus into $\PP^{d}$ and then using the Gr\"obner complex to give $\trop(X)$ a polyhedral complex structure.
\end{proof}

	We now give a more precise restatement of the connectivity part of Theorem~\ref{thm: tropical-theorem'}.
	
	A \textit{generalized interval} is any definable space obtained as follows: Given an interval $\big[\gamma_{a},\ \!\gamma_{b}\big]\subset \RR^{(k)}\sqcup \{\infty\}$, we may consider either $\big[\gamma_{a},\ \!\gamma_{b}\big]$ with its induced lexicographic order, or with its opposite lexicographic order. The choice of order is called an \textit{orientation} of the interval. A generalized interval is any definable space obtained from a collection of oriented intervals		
	$$
		\big[\gamma_{1a},\ \!\gamma_{1b}\big],\ \big[\gamma_{2a},\ \!\gamma_{2b}\big],\ \dots,\ \big[\gamma_{na},\ \!\gamma_{nb}\big]\ \ \subset\ \ \RR^{(k)}\sqcup \{\infty\}
		$$
by identifying the consecutive endpoints, respecting the orientations. That is, by identifying the largest endpoint of $[\gamma_{ma},\gamma_{mb}]$ with the smallest endpoint of $[\gamma_{(m+1)a},\gamma_{(m+1)b}]$.

\begin{theorem}\label{thm: definable-connectivity}
{\bf (Restatement of Theorem~\ref{thm: tropical-theorem'}).}
Let $X$ be a geometrically connected subvariety of a split algebraic torus over $K$. Given two points $x,y\in \trop(X)$, there exists a generalized interval $I$, together with a definable, continuous morphism $P:I\to \trop(X)$ whose endpoints map to $x$ and $y$.
\end{theorem}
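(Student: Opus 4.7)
The plan is to closely parallel the proof of Theorem~\ref{thm: xH-euclidean-path-connectivity}, but working in the definable category and with respect to the order topology rather than the Euclidean one. Since the tropicalization map $\trop\colon X^\fH \to \trop(X)$ is continuous, surjective, and evidently definable, it suffices to produce a definable continuous map from a generalized interval $I$ to $X^\fH$ whose endpoints map to lifts $z,w$ of $x,y$. After passing to a sufficiently large algebraically closed Hahn valued extension $E/K$, the points $x,y$ are the tropicalizations of $E$-rational points $z,w \in X(E) \subset X^\fH$.

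Following the strategy used for Theorem~\ref{thm: xH-euclidean-path-connectivity}, I would apply Bertini's theorem to find a geometrically irreducible $E$-curve $C \subset X_E$ passing through both $z$ and $w$, normalize and compactify to obtain a smooth proper curve $\hat C$, and then choose a stable marked model $\mathscr C$ over $R_E$ in which the sections corresponding to $z$ and $w$ are among the marks. After a finite base change and blow-up, I may assume the components of $\mathscr C_0$ have no self-intersection. Construction~\ref{remark: length assignment} then produces a weighted dual graph $G$ with edge lengths $\underline \ell$, and the argument of Proposition~\ref{prop: skeleton-embeds} gives a continuous embedding of the geometric realization $|G(\underline \ell)|$ into $C^\fH$. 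The points $z$ and $w$ appear as the infinite endpoints of the edges corresponding to their respective marks.

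It remains to extract from this embedding a definable continuous map $I \to |G(\underline \ell)|$ from a generalized interval whose endpoints map to $z$ and $w$. Choose a simple edge path in the graph $G$ from the vertex adjacent to the edge corresponding to $z$ to the vertex adjacent to the edge corresponding to $w$, consisting of consecutive edges $e_{i_1}, \ldots, e_{i_n}$. Each $e_{i_j}$ is realized as the closed interval $[0,\ell(e_{i_j})] \subseteq \RR^{(k)}\sqcup\{\infty\}$, and I would orient each such interval according to the direction of traversal in the chosen path. Concatenating these oriented intervals in order, respecting the identifications of successive endpoints at the intermediate vertices, produces precisely a generalized interval $I$ in the sense of the definition immediately preceding the statement, together with a tautological piecewise-identity map $I \to |G(\underline \ell)|$ which is visibly definable and continuous in the order topology. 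Composing with the skeleton embedding and then with $\trop$ gives the desired morphism $I \to \trop(X)$.

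The main obstacle I anticipate is careful bookkeeping of orientations: at each intermediate vertex the next edge may need to be traversed against the natural lexicographic orientation of its interval $[0,\ell(e_{i_j})]$, and the generalized-interval formalism is designed precisely to absorb such orientation reversals. A secondary, but essential, point is to verify that the skeleton map of Proposition~\ref{prop: skeleton-embeds}, which there is stated in the Euclidean topology, is definable and continuous in the order topology as well. This should be immediate on inspection: the monomial formula $\val_\omega(\sum a_{jk}x^j y^k) = \min_{j,k} \{\nu(a_{jk}) + j\omega + k(\ell(e) - \omega)\}$ depends $\mathbf{K}$-definably on $\omega$, so the evaluation functions $\ev_f \circ \iota$ are definable and order-continuous on each interval $|e|$, which is enough.
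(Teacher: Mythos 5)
Your outline matches the paper's proof step for step up to the final verification: reduce to a curve via Bertini after a large Hahn-valued base change, pass to a stable marked semistable model over the valuation ring, embed the weighted dual graph $|G(\underline{\ell})|$ into $C^{\fH}$ via the order-topology analogue of Proposition~\ref{prop: skeleton-embeds}, and extract a generalized interval $I$ from an edge path in $G$. Where you diverge is the definability of $P\colon I\to\trop(X)$. The paper routes this through Proposition~\ref{prop: faithful-trop} (faithful tropicalization for curves, adapted from~\cite[Theorem 9.5]{GRW}): one enlarges the set of coordinate functions on $C$ to a map $\psi\colon C\dashrightarrow\mathbb{G}_{\mathrm{m}}^{d+m}$ whose tropicalization is injective on $|G|$, factors $P$ as the embedding of a $1$-dimensional rational polyhedral complex into $(\RR^{(k)})^{d+m}$ followed by a coordinate projection, and cites definability of both constituent maps. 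You instead argue directly that each coordinate of $P$ is $\mathbf{K}$-definable in the interval parameter $\omega$ because the monomial valuation $\val_{\omega}$ is given by a finite minimum of affine functions of $\omega$. This is a shorter route and avoids the faithful-tropicalization machinery entirely, which is a genuine simplification; the paper's detour buys the extra information that the image of $|G|$ is literally a $1$-dimensional subcomplex of a tropical curve in a larger torus, but that information is not needed for mere definability of the graph of $P$.

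Two small things to tighten. First, the characters of the ambient torus restrict to rational, not regular, functions on the node charts $R[x,y]/(xy-f)$; you should say explicitly that $\val_{\omega}(g/h)=\val_{\omega}(g)-\val_{\omega}(h)$ and that this remains a finite minimum-of-affine expression in $\omega$, hence definable. Second, the edge path you describe runs between the vertices adjacent to the marked edges for $z$ and $w$; to make the endpoints of $I$ actually hit $z$ and $w$ under $\trop\circ\iota$, you must prepend and append the two infinite-length marked edges themselves, so that the endpoints of $I$ are the points at infinity of those edges, which Proposition~\ref{prop: skeleton-embeds} identifies with the $E$-rational points $z'$ and $w'$.
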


We begin the proof with an analogue of Construction~\ref{valued graphs} for the extended order topology.

\begin{construction}\label{valued graphs'}
Let $G$ be a finite graph with edges $e_1,\ldots, e_r$. Fix a tuple
	$$
	\underline \ell\ =\ \big(\ell(e_1),\ldots, \ell(e_r)\big)\ \in\ \big(\RR^{(k)}_{\geq 0}\sqcup \{\infty\}\big)^r.
	$$
Denote by $|e_i|$ the interval $\big[0,\ell(e_i)\big]\subset \RR^{(k)}\sqcup \{\infty\}$, considered as a subspace in the extended order topology. Denote by $|G(\underline \ell)|$ the topological space obtained by gluing endpoints of $|e_i|$ and $|e_j|$ when the corresponding endpoints $e_i$ and $e_j$ are identified.
\end{construction}

	Let $\mathscr C$ be a proper, regular, marked semistable $R$-model with generic fiber $C$. Let $G$ be the marked dual graph of associated to $\mathscr{C}$ as in Remark \ref{remark: length assignment}, let $\ell(e_{i})\in\RR^{(k)}$ denote the length assigned to each edge $e_{i}$ in $G$, and let $|G(\underline{\ell})|$ the resulting topological space described in Construction \ref{valued graphs'}.

\begin{proposition}
There is a continuous embedding $|G(\underline \ell)|\hookrightarrow C^\fH$. Moreover, $e$ is an edge corresponding to a marked point $p$ of the generic fiber $C$, the infinite point of $|e|$ is mapped to the image of $p$ under the inclusion $C(K)\hookrightarrow C^\fH$. 
\end{proposition}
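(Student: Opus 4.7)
The plan is to mirror the proof of Proposition \ref{prop: skeleton-embeds} on the level of points, and then to re-derive the topological statements relative to the extended order topology rather than the extended Euclidean topology.

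First, define $\iota\colon|G(\underline{\ell})| \to C^\fH$ pointwise exactly as in the proof of Proposition \ref{prop: skeleton-embeds}. On an edge $|e|$ associated to a node with local equation $xy = f$ and $\ell(e) = \nu(f)$, send $\omega \in [0, \ell(e)]$ to the monomial valuation
$$
\val_\omega(g) \;=\; \min\big\{\nu(a_{jk}) + j\omega + k(\ell(e) - \omega)\big\}, \qquad g = \sum a_{jk}x^jy^k.
$$
On an edge corresponding to a marked section cut out by $x$, send $\omega$ to $\val_\omega(g) = \min\{\nu(a_j) + j\omega\}$ for $g = \sum a_j x^j$; at the endpoint $\omega = \infty$ this specializes to the valuation of the marked $K$-point $p$, since the formula forces $\val_\infty(x) = \infty$ and $\val_\infty(g) = \nu(a_0)$ for $a_0 \neq 0$. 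As in the Euclidean case these are bona fide ring valuations and agree at endpoints associated to vertices, so they glue into a well-defined map $\iota$.

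The main substantive step is to verify continuity of $\iota$ in the order topology. Because the topology on $C^\fH$ is initial for the evaluation functions $\ev_g$, it suffices to show that $\ev_g \circ \iota \colon |G(\underline{\ell})| \to \RR^{(k)} \sqcup \{\infty\}$ is continuous with respect to the extended order topology, for every regular function $g$. Restricted to a single edge, this composite is a finite minimum of affine maps of the form $\omega \mapsto c + n\omega$ with $c \in \RR^{(k)} \sqcup \{\infty\}$ and $n \in \ZZ$. Such an affine map factors as a translation composed with an integer scaling; translation is an order-preserving bijection of $\RR^{(k)}$, and integer scaling is order-preserving or order-reversing according to the sign of $n$. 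Both are continuous in the order topology (with the convention $\infty + \gamma = \infty$), and a finite minimum of continuous functions into a totally ordered space is continuous, since the preimage of $(a, \infty)$ is a finite intersection of opens and the preimage of $(-\infty, a)$ is a finite union.

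For the continuous inverse, observe that $\val_\omega(x) = \omega$ on edges associated to nodes, so the inverse of $\iota|_{|e|}$ is simply the restriction of $\ev_x$, which is continuous by definition of the order topology on $C^\fH$; an analogous argument using $\ev_x$ handles the marked-point edges. Injectivity on each edge follows. Points on different edges are separated by their reductions to $\mathscr{C}_0$, which lie on distinct components or marked sections, exactly as in the rank-$1$ case. The main subtlety, rather than a genuine obstacle, is keeping track of continuity in the order topology on $\RR^{(k)}$, which for $k \geq 2$ is strictly finer than the Euclidean topology and is not metrizable, so arguments must be phrased in terms of subbasic opens and not in terms of sequences. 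The three operations used above (translation, integer scaling, and finite minimum) are all formally compatible with the order topology, so the Euclidean argument transports without modification.
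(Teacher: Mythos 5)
Your proof is correct and follows the same route as the paper, which simply says the argument is identical to that of the Euclidean-topology version (Proposition~\ref{prop: skeleton-embeds}) with the topology swapped. Your explicit verification that translations, integer scalings, and finite minima remain continuous in the order topology on $\RR^{(k)}\sqcup\{\infty\}$ is exactly the detail the paper is implicitly appealing to.
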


\begin{proof}
The proof is identical to that of Proposition~\ref{prop: skeleton-embeds}, replacing the Euclidean topology with the order topology throughout.
\end{proof}

The rank-$1$ analogue of the following proposition was proved in~\cite[Theorem 6.22]{BPR}. 

\begin{proposition}[\textbf{Faithful tropicalization for curves}]\label{prop: faithful-trop}
Let $\mathscr C$ be a marked model as above. There exists a rational map $\varphi: C\dashrightarrow \mathbb G_{\mathrm{m}}^n$ such that the tropicalization of $C$ with respect to $\varphi$ is injective on the subspace $|G(\underline \ell)|\subset C^\fH$.
\end{proposition}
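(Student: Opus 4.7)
The plan is to adapt the strategy of~\cite{BPR}, taking advantage of the fact that tropicalizations of rational functions on $C$ still have integer slopes on the skeleton $|G(\underline\ell)|$, even though the edge lengths now live in $\RR^{(k)}$. Specifically, for any $\varphi\in K(C)^\times$ and any edge $e_i$ with local equation $x_iy_i=f_i$ at the corresponding node $p_i$, the explicit monomial formula for $\val_\omega$ in the proof of Proposition~\ref{prop: skeleton-embeds} shows that $\omega\mapsto\val_\omega(\varphi)$ is piecewise affine on $|e_i|$ with integer slopes determined by the exponents appearing in a Laurent expansion of $\varphi$ in $x_i,y_i$. So the combinatorial side of the problem --- the slope data on $G$ --- is insensitive to the rank of the valuation.

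First I would produce, for each edge $e_i$ of $G$, a rational function $\varphi_i\in K(C)^\times$ whose tropicalization on $|G(\underline\ell)|$ has nonzero integer slope along $|e_i|$. The existence of such a $\varphi_i$ reduces, via the slope formula, to the existence of a rational function on $\mathscr{C}$ with prescribed orders of vanishing along the two components of the central fiber meeting at $p_i$; this can be arranged using Riemann--Roch on $C$ or by pulling back coordinate functions under a sufficiently ample projective embedding, and then rescaling by elements of $K^\times$ so that each $\varphi_i$ is generically nonvanishing and the tuple $\varphi=(\varphi_1,\dots,\varphi_n)$ defines a rational map to $\mathbb{G}_{\mathrm{m}}^n$.

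Next, to verify injectivity of $\trop\circ\varphi$ on $|G(\underline\ell)|$, I would argue case by case. Given $\omega\neq\omega'$ on the same edge $|e_i|$, any index $j$ for which $\varphi_j$ has nonzero integer slope $s$ on $|e_i|$ satisfies $\val_\omega(\varphi_j)-\val_{\omega'}(\varphi_j)=s(\omega-\omega')\neq 0$, because $\RR^{(k)}$ is torsion-free and $s\neq 0$. Given $\omega,\omega'$ on distinct edges, enlarging the tuple so that the slope vectors attached to edges are pairwise distinct and so that the values at the vertices of $G$ are separated yields injectivity on such pairs as well; this second step is a purely combinatorial argument that mirrors the rank-$1$ case.

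The main obstacle I expect is the existence step: producing rational functions on $C$ with prescribed slope profiles along $G$. In rank $1$ this follows from the Poincar\'e--Lelong-style slope formula of~\cite{BPR} together with Mikhalkin--Zharkov lifting of tropical divisors. In our higher rank setting the slopes remain integer-valued and the combinatorics is unchanged, but one has to verify that the required divisor-theoretic input on $\mathscr{C}$ and $C$ carries through over the Hahn valuation ring $R$. This is plausible because $\mathscr{C}$ is finite type over $R$ and the relevant lifting statements are algebraic rather than metric in nature, but the careful transcription of the rank-$1$ machinery is where the bulk of the technical work lies.
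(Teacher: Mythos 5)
Your observation that slope data remains integer-valued and that the combinatorics of the skeleton is insensitive to the rank of the valuation is a genuinely useful reduction, and your argument for injectivity on a single edge (using torsion-freeness of $\RR^{(k)}$ together with nonzero integer slope) is sound. However, your approach diverges from the paper's in a way that leaves a real gap.

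The paper does not attempt to prescribe slopes via divisor theory or Riemann--Roch at all; it works directly with the model $\mathscr{C}$. It takes the local coordinates $x_i, y_i$ near each node $q_i$, a local equation $z_r$ near each marked section, and, for each element $\zeta$ of the set $\mathfrak{Z}$ of strata (marks, generic points of components, nodes), regular functions $f_{\zeta,jk}$ whose reduction cuts out $\overline{\zeta}$. The injectivity argument is then a case analysis on the reduction map $\red_{\mathscr{C}}: C^\fH \to \mathscr{C}_0$: points reducing to adjacent strata are separated by the local coordinates $x_i,y_i$ or $x_i,z_r$, while points reducing to non-adjacent strata are separated by some $f_{\zeta,jk}$ that is strictly positive on one and zero on the other. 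This is both simpler than your construction and structurally different.

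The gap in your argument is the treatment of $\omega,\omega'$ on distinct edges. Requiring the slope vectors on distinct edges to be pairwise distinct and requiring the images of the vertices to be separated does not by itself imply injectivity of the resulting piecewise-linear map $|G(\underline{\ell})| \to (\RR^{(k)})^n$: two edge segments with distinct direction vectors can still cross in their interiors, and a vertex image can lie on the interior of the image of another edge. This is exactly the problem the paper's functions $f_{\zeta,jk}$ and the reduction map are designed to resolve --- they provide coordinates that record which stratum a point reduces to, which is strictly more information than the slope profile. Your existence step is also considerably heavier than what is needed: while the rank-$1$ Poincar\'e--Lelong and lifting machinery can plausibly be transcribed to the Hahn setting, the paper avoids that entire discussion by reading off the needed functions directly from the model, and you would be taking on unnecessary technical debt by going that route.
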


\begin{proof}
The result follows from arguments similar to those given for the rank-$1$ case in the proof of \cite[Theorem 9.5]{GRW}. We explain how this proof may be adapted to the present context.
	
	If $\mathscr U$ is an affine open in $\mathscr C$, define $U:=\mathscr{U}_{K}$, and let $U^{\fH\ge0}$ denote the subspace of $U^\fH$ consisting of those points represented by morphisms $\spec_{\ \!}L\to U$ that extend to a morphism $\spec_{\ \!}R_{L}\to \mathscr U$, where $R_{L}$ denotes the valuation ring of $L$. There is a reduction map
	\begin{equation}\label{equation: local reduction maps}
	\red_{\mathscr{U}}:U^{\fH\ge0}\lra\mathscr{U}_{0}
	\end{equation}
given by sending a point $\spec_{\ \!}R_{L}\to \mathscr U$ to the image of the closed point of $\spec_{\ \!}R_{L}$. Since the model $\mathscr{C}$ is proper, $C^\fH$ is covered by the sets $U^{\fH\ge0}$, and one can check, as in the rank-$1$ case~\cite[Section 2]{MN12}, that the local reduction maps (\ref{equation: local reduction maps}) fit together to give a well defined reduction map
$$
\mathrm{red}_{\mathscr C}: C^\fH\lra\mathscr C_0.
$$

	Let $\mathfrak Z$ be the set consisting of the marks on $\mathscr{C}$, the generic points of the irreducible components of $\mathscr{C}_{0}$, and the nodes of $\mathscr{C}_{0}$. Projectivity and quasicompactness of $\mathscr C$ implies that for each $\zeta\in \mathfrak Z$, there exists a finite cover of $\mathscr{C}$ by affine opens $\mathscr U_{\zeta,j}$ containing $\zeta$. On each open $\mathscr U_{\zeta,j}$ there are finitely many regular functions $f_{\zeta, jk}$ whose reduction to the special fiber have zero set $\overline \zeta\cap \mathscr U_{\zeta,j}$. For every $p\in U^{\fH\ge0}_{\!\zeta,j}$, we have that $\val_p(f_{\zeta,jk}) = 0$ if the reduction of $p$ is not contained in the closure of $\zeta$, and $\val_p(f_{\zeta,jk}) = 0$ is strictly positive otherwise. 

	Near each node $q_i$ of $\mathscr{C}_{0}$, we have rational functions $x_i$ and $y_i$ cutting out the components meeting at $q_i$. Similarly, for each marked section $s_r$, we obtain a rational function $z_r$. The collection of functions $\{x_i,y_i,z_r,f_{\zeta,jk}\}$ determine a rational map $C\dashrightarrow \mathbb G_{\mathrm{m}}^n$. If $p,p'\in |G(\underline \ell)|\subset C^\fH$ are points whose reductions $\red_{\mathscr{C}}(p)$ and $\red_{\mathscr{C}}(p')$ lie in adjacent strata of $\mathscr{C}_{0}$, then some pair of functions $x_{i}$ and $y_{i}$ or $x_{i}$ and $z_{r}$ separate $p$ and $p'$. If the reductions $\red_{\mathscr{C}}(p)$ and $\red_{\mathscr{C}}(p')$ lie in non-adjacent strata of $\mathscr{C}_{0}$, there is a rational function $f_{\zeta,jk}$ in our collection such that $\val_p(f_{\zeta,jk})\neq \val_{p'}(f_{\zeta,jk})$. One checks as in the concluding paragraph of the proof of~\cite[Theorem 9.5]{GRW} that there exists a $\zeta\in \mathfrak Z$ and a function $f_{\zeta,jk}$ such that $\val_p(f_{\zeta,jk})$ is strictly positive, but $\val_{p'}(f_{\zeta,jk})$ is $0$.
\end{proof}

\begin{proof}[Proof of Theorem~\ref{thm: definable-connectivity}]
Let $X$ be a geometrically connected subvariety of a $d$-dimensional torus over $K$. Choose $z,w\in \trop(X)$. After base changing $X$ to a sufficiently large field $F$ over $K$, we can assume that there exist points $z',w'\in X_{F}(F)\subset X^\fH_{F}$ mapping to $z$ and $w$ under tropicalization. As before, Bertini's theorem produces an irreducible $F$-curve $C$ in $X_{F}$ connecting $z'$ and $w'$. Let $\hat C$ be an $F$-curve compactifying the normalization of $C$. Choose a semistable $R_F$-model for $\hat C$ marked at $z'$, $w'$. Denote by $|G|\subset C^\fH$ the graph constructed as in the proposition above. Since $C$ is connected, $G$ is also connected, and one may find a generalized interval $I\subset |G|\subset C^\fH$. Composing with the tropicalization map, we obtain a map $P:I\to \trop(X)$ connecting $z$ and $w$. Continuity is clear, and so it remains only to show the definability of this path. The curve $C$ is embedded in the torus $\mathbb G_{\mathrm{m}}^d$. Using Proposition~\ref{prop: faithful-trop} above, one may enlarge the embedding set of functions for $C$ to obtain a rational morphism $\psi: C\dashrightarrow\mathbb G_{\mathrm{m}}^{d+m}$, such that the tropicalization of $C$ with respect $\psi$ is injective on $|G|$ and hence on $I$. The path $P$ factors as
	$$
	\begin{xy}
	(0,0)*+{I}="1";
	(23,0)*+{(\RR^{(k)})^{d+m}}="2";
	(50,0)*+{(\RR^{(k)})^{d}}="3";
	{\ar "1"; "2"};
	{\ar "2"; "3"};
	{\ar@/^15pt/^{P} "1"; "3"};
	\end{xy}.
	$$
The first map is an embedding of a $1$-dimensional polyhedral complex and is consequently definable. The second map is projection onto the first $d$ factors, hence also definable. Thus $P$ is definable and continuous.
\end{proof}

The arguments above imply the following connectivity property of $X^\fH$.

\begin{theorem}\label{thm: xH-galois-connectivity}
Let $X$ be a geometrically connected $K$-variety. Given any two points $z,w\in X^\fH$, there exists a Hahn valued field $F$ extending $K$, points $\widetilde z,\widetilde w\in (X\times_K F)^\fH$ mapping to $z$ and $w$, and a continuous definable path $P: I\to (X\times_K F)^\fH$ connecting $\widetilde z$ and $\widetilde w$.
\end{theorem}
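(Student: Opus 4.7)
The plan is to mimic the proof of Theorem \ref{thm: definable-connectivity}, but produce the path at the level of the Hahn analytification itself rather than pushing it forward to the tropicalization.

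Given $z,w \in X^{\fH}$, each is represented by a valuation on the coordinate ring of some affine open, whose residue field carries the structure of a Hahn valued extension of $K$. Let $F$ be any algebraically closed Hahn valued extension of $K$ containing both residue fields. By functoriality of Hahn analytification, the base change induces a continuous map $(X \times_K F)^{\fH} \to X^{\fH}$, and there are $F$-rational points $\widetilde z, \widetilde w \in X_F(F) \subset (X \times_K F)^{\fH}$ mapping to $z$ and $w$. Apply Bertini as in the proof of Theorem \ref{thm: xH-euclidean-path-connectivity} to find a geometrically irreducible $F$-curve $C \subset X_F$ containing both points, normalize and compactify to obtain a smooth proper $F$-curve $\hat C$, and mark $\widetilde z, \widetilde w$ as sections. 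Exactly as in the proof of Theorem \ref{thm: definable-connectivity}, the valuative criterion for $\overline{M}_{g,n}$ (possibly after finite base change) together with a blowup produces a proper regular marked semistable $R_F$-model $\mathscr C$ whose special fiber has no self-intersecting components.

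Let $G$ be the marked dual graph of $\mathscr C_0$ with edge lengths $\underline \ell$ assigned by Construction \ref{remark: length assignment}, and invoke the order-topology analogue of Proposition \ref{prop: skeleton-embeds} to obtain a continuous inclusion $|G(\underline \ell)| \hookrightarrow \hat C^{\fH}$ carrying the infinite endpoints of the two marked edges to $\widetilde z$ and $\widetilde w$. Since $G$ is connected, a simple edge-path from one marked vertex to the other determines a generalized interval $I \subset |G(\underline \ell)|$ with the desired endpoints, in the sense defined before Theorem \ref{thm: definable-connectivity}. Composing with the continuous morphism $\hat C^{\fH} \to (X \times_K F)^{\fH}$ induced by the rational map $\hat C \dashrightarrow X_F$ yields a continuous map $P: I \to (X \times_K F)^{\fH}$ connecting $\widetilde z$ and $\widetilde w$.

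It remains to verify that $P$ is definable, which is the step that requires more care than in the Euclidean case, since $(X \times_K F)^{\fH}$ is not a priori a definable space. The way around this, exactly as in the final paragraph of the proof of Theorem \ref{thm: definable-connectivity}, is to invoke Proposition \ref{prop: faithful-trop} to choose a rational map $\varphi: \hat C \dashrightarrow \mathbb G_{\mathrm m}^n$ whose tropicalization is injective on $|G(\underline \ell)|$. This identifies $I$ with a $1$-dimensional rational polyhedral subcomplex of $(\RR^{(k)})^n$, hence with a definable set, and the resulting factorization of $P$ through this faithful tropicalization makes $P$ definable in a manner compatible with the prodefinable structure on $(X\times_K F)^{\fH}$ developed in Theorem \ref{cor: prodefinability}. \textbf{The main obstacle}, beyond the mechanical lift of the tropical argument one level up, is precisely this interpretation of definability for a path landing in the Hahn analytification rather than in the tropicalization; it is circumvented by the faithful tropicalization.
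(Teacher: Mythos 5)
Your proposal is essentially the argument the paper has in mind: the paper in fact gives no separate proof of this theorem, remarking only that ``the arguments above imply'' it, and what you have written is precisely the promotion of the proof of Theorem~\ref{thm: definable-connectivity} from the tropicalization back up to the Hahn analytification, using Proposition~\ref{prop: faithful-trop} to read off definability. One small imprecision worth flagging: the rational map $\hat C \dashrightarrow X_F$ does not literally induce a continuous map $\hat C^{\fH}\to (X\times_K F)^{\fH}$; rather, the path $I\subset|G(\underline\ell)|$ is chosen to lie in the locus where the birational morphism $\hat C\dashrightarrow C\hookrightarrow X_F$ is defined (the interior of the skeleton together with the two marked edges whose infinite endpoints are $\widetilde z$ and $\widetilde w$), which is what actually gives the composite $P:I\to(X\times_K F)^{\fH}$. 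This looseness is also present in the paper's treatment, so it is not a gap specific to your write-up.
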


\section{Comparison results}\label{sec: comparison}

We briefly explain how the Hahn analytification relates to the Huber analytification and the Hrushovski-Loeser spaces.

\subsection{Comparison results I: Huber adic space} Let $K$ be a field, complete with respect to a nontrivial rank-$1$ valuation. For any affine $K$-scheme $X=\mathrm{Spec}_{\ \!}K[X]$, the {\em adic space} $X^{\mathrm{ad}}$ associated to $X$ is the set
	\begin{equation}\label{Huber analytification def}
	\begin{aligned}
	X^{\mathrm{ad}}
	\ \ \ =\ \ \ 
	\left.
	\left\{
		\begin{array}{c}
		\mbox{valuations $K[X]\lra\Upsilon\sqcup\{\infty\}$ restricting}\\
		\mbox{to $K\rightarrow v_{K}(K^{\times})\sqcup\{\infty\}\hookrightarrow\Upsilon\sqcup\{\infty\}$}
		\end{array}
	\right\}
	\right/
	{}_{\mbox{{\larger\larger $\sim$}}}\ \ ,
	\end{aligned}
	\end{equation}
where ``$\sim$" is the equivalence relation generated by the requirement that two valuations $K[X]\lra\Upsilon_{1}\sqcup\{\infty\}$ and $K[X]\lra\Upsilon_{2}\sqcup\{\infty\}$ be equivalent if there exists an inclusion $\Upsilon_{1}\hookrightarrow\Upsilon_{2}$ of totally ordered abelian groups such that the diagram
	$$
	\begin{aligned}
	\begin{xy}
	(0,0)*+{\ \ \Upsilon_{2}\sqcup\{\infty\}}="1";
	(0,-20)*+{\ \ \overset{}{\Upsilon_{1}\sqcup\{\infty\}}\!\!}="2";
	(-18,-10)*+{K[X]}="3";
	{\ar@{_{(}->} "2"; "1"};
	{\ar "3"; "1"};
	{\ar "3"; "2"};
	\end{xy}
	\end{aligned}
	$$
commutes. The space $X^{\mathrm{ad}}$ is equipped with the topology generated by sets of the form
	$$
	U\big(\tfrac{f}{g}\big)
	\ \ :=\ \ 
	\big\{
	x\in X^{\mathrm{ad}}\ :\ \mathrm{val}_{x}(g)<\mathrm{val}_{x}(f)
	\big\},
	$$
where $f$ and $g$ are regular functions on $X$. 
	
	If we fix an embedding of totally ordered abelian groups $\Gamma \hookrightarrow\mathbb{R}^{(k)}$, then every valuation $\mathrm{val}_{x}:K[X]\lra\mathbb{R}^{(k)}$ that describes a point $x$ of the resulting Hahn analytification $X^{\fH}$ satisfies $\mathrm{val}_{x}\big|_{K}=v_{K}$. Thus we obtain a map
	$$
	\eta\ :\ X^{\fH}\ \lra\ X^{\mathrm{ad}}.
	$$

\begin{theorem}\label{thm: huber-comparison}
If we choose our Hahn embedding $\Gamma\hookrightarrow\mathbb{R}^{(k)}$ to be the inclusion $r\mapsto(r,0,\dots,0)$ into the $1^{\mathrm{st}}$ factor of $\mathbb{R}^{(k)}$, and if
	$$
	k
	\ \ \ge\ \ 
	1+\mathrm{dim}_{\mathrm{Krull}\ \!}X,
	$$
then the map $\eta:X^{\fH}\lra X^{\mathrm{ad}}$ is surjective.
\end{theorem}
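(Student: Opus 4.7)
The plan is to show that every equivalence class in $X^{\mathrm{ad}}$ admits a representative valued in $\RR^{(k)}$ compatibly with the given first-factor embedding $\rho : \Gamma \hookrightarrow \RR^{(k)}$. Fix a point $x \in X^{\mathrm{ad}}$ represented by a valuation $v : K[X] \to \Upsilon \sqcup \{\infty\}$ extending $v_K$, let $\mathfrak{p} := v^{-1}(\infty)$, and pass to the induced valuation on $F := \mathrm{Frac}(K[X]/\mathfrak{p})$. Replacing $\Upsilon$ by the subgroup generated by $v(F^\times)$ does not alter the Huber-equivalence class, so it suffices to produce an order-preserving embedding $\phi : \Upsilon \hookrightarrow \RR^{(k)}$ with $\phi|_{\Gamma} = \rho$; the composite $\widetilde v := \phi \circ v$ then yields a Hahn valuation with $\eta(\widetilde v) = x$.

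The first step is to bound the rank of $\Upsilon$. Applying Abhyankar's inequality to the extension $v/v_K$ on $F/K$---which has transcendence degree at most $\dim_{\mathrm{Krull}}X$---I would obtain $\mathrm{rank}(\Upsilon) \leq 1 + \dim_{\mathrm{Krull}}X \leq k$. Writing the chain of convex subgroups as $0 = C_{0} \subsetneq C_{1} \subsetneq \cdots \subsetneq C_{r} = \Upsilon$ with $r \leq k$, each rank-one quotient $C_i/C_{i-1}$ embeds order-preservingly into $\RR$. Hahn's embedding theorem then supplies an order-preserving embedding $\Upsilon \hookrightarrow \RR^{(r)}$, and padding with zeros yields $\Upsilon \hookrightarrow \RR^{(k)}$ in which the top archimedean quotient $C_r/C_{r-1}$ is placed in the first (dominant) factor.

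It remains to verify that $\Gamma$ itself lies in the top archimedean class $C_r/C_{r-1}$, so that $\phi(\Gamma)$ lands in the first factor. The topological compatibility enforced on valuations representing points of Huber's adic space---continuity with respect to the $v_K$-adic topology on $K[X]$---forces every positive element of $\Upsilon$ to be archimedean-comparable to some positive element of $\Gamma$. Granting this, $\Gamma$ injects into $C_r/C_{r-1}$, and by adjusting the chosen embedding of this top quotient into $\RR$ so that it extends the prescribed $\rho : \Gamma \hookrightarrow \RR$ (possible because $\RR$ is divisible, hence injective among torsion-free ordered abelian groups containing $\Gamma$), the global embedding $\phi$ satisfies $\phi|_{\Gamma} = \rho$, and one concludes.

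The principal obstacle is the archimedean placement of $\Gamma$ carried out in the third step. For a completely arbitrary valuation on $K[X]$ extending $v_K$, the subgroup $\Gamma$ may be infinitesimal inside $\Upsilon$: one can build rank-two composite valuations on $K[s]$ in which $v(s)$ dominates every multiple of $v(t)$ for a uniformizer $t \in K$, and such valuations admit no Hahn representative with $\rho$ fixed as a first-factor inclusion. The role of Huber's topological condition (implicit in the adic formalism) is precisely to exclude these pathological valuations and thereby to pin $\Gamma$ in the dominant archimedean class, which is what makes the Hahn-theoretic embedding $\phi$ above possible.
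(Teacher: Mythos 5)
Your proposal is correct and follows the same route as the paper --- bound the rank of $\Upsilon$, invoke Hahn's embedding theorem, and arrange compatibility with $\rho$ --- but you supply a real argument at exactly the step the paper dismisses as ``straightforward to check.'' You correctly isolate the obstruction: compatibility with the first-factor embedding $\rho$ forces $\Gamma$ to lie in the dominant archimedean class of $\Upsilon$ (equivalently, $\Gamma$ must be cofinal in $\Upsilon$), and this is not automatic for an arbitrary valuation extending $v_K$. Your example of a rank-two composite valuation on $K[s]$ with $v(s)$ infinitesimally large relative to $\Gamma$ has no Hahn representative compatible with the prescribed first-factor $\rho$, since nothing in $\RR^{(k)}$ dominates $\rho(\Gamma)$ when $\Gamma$ is cofinal in $\RR$. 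It is Huber's continuity condition that rules this out, and you are right that the displayed definition of $X^{\mathrm{ad}}$ in this section omits continuity, so a literal reading would falsify the theorem; the intended (Huber) definition is what makes the cofinality argument go through. Two smaller remarks: using Abhyankar's inequality for the rank bound is cleaner than the paper's appeal to a correspondence between convex subgroups of $\Upsilon$ and a chain of primes in $K[X]$ (the standard correspondence is with primes of the valuation ring, and the paper's phrasing elides the passage to $K[X]$); and the claim that $\RR$ is ``injective among torsion-free ordered abelian groups'' is imprecise --- injectivity of $\RR$ as an abelian group does not by itself preserve order or injectivity of maps --- the clean justification is H\"older's theorem, which embeds the archimedean quotient $C_r/C_{r-1}$ into $\RR$ uniquely up to positive scaling, so one rescales to agree with $\rho$ on $\Gamma$.
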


\begin{proof}
We may assume that $X$ is affine with coordinate ring $K[X]$. Consider a point $x\in X^{\rm ad}$ represented by the valuation $\nu: K[X]\to \Upsilon\sqcup\{\infty\}$, and note that this valuation induces an inclusion $\Gamma\hookrightarrow\Upsilon$.
	
	The chain of convex subgroups of $\Upsilon$ is in bijection with a chain of prime ideals of $K[X]$ (see ~\cite[Section 2.3]{EP05}). Since $X$ is finite dimensional, this chain of convex subgroups is finite. Thus we may assume that $\Upsilon$ has finite rank bounded by $\dim(X)+1$. Since every convex subgroup is a union of archimedean equivalence classes of $\Upsilon$, we deduce that $\Upsilon$ admits an order-preserving embedding $\Upsilon\hookrightarrow \RR^{(k)}$. It is straightforward to check that this embedding can be chosen to be compatible with the embedding $\Gamma\hookrightarrow\mathbb{R}^{(k)}$. In this way, we obtain a valuation
$$
K[X]\lra \Upsilon\sqcup\{\infty\}\mono \RR^{(k)}\sqcup\{\infty\},
$$
and thus a point $x_h$ of $X^\fH$. It is clear that $x_h\mapsto x$. 
\end{proof}

\begin{remark}
The map $\eta:X^{\fH}\lra X^{\mathrm{ad}}$ will not be continuous in general. For instance, let $\Gamma=\mathbb{R}$ and $k=1$. Then $X^{\fH}=X^{\mathrm{an}}$, and $\eta$ becomes the section
	\begin{equation}\label{section}
	X^{\mathrm{an}}\lra X^{\mathrm{ad}}
	\end{equation}
of the maximal Hausdorff quotient map $X^{\mathrm{ad}}\lra\!\!\!\!\rightarrow X^{\mathrm{an}}$, sending a higher rank valuation $\val:\mathscr O(U)\to \Upsilon\sqcup\{\infty\}$ to the valuation $\mathscr O(U)\to \Upsilon \sqcup\{\infty\}\to \big(\Upsilon/\Upsilon_{\!1}\big)\sqcup\{\infty\}$,
where $\Upsilon_{\!1}$ is the largest proper convex subgroup of $\Upsilon$. This section ({\ref{section}}) is not continuous in general~\cite[Proposition 8.3.1, Lemma 8.1.8]{Hub}.
\end{remark}

\subsection{Comparison results II: Stable completion} In~\cite{HL10}, Hrushovski and Loeser associate to any variety $X$ over an algebraically closed valued field $K$ a space $\widehat{X}$ called the \textit{stable completion of $X$}. The relationship between $\widehat X$ and $X^\fH$ is similar to the relationship between $\widehat X$ and the Berkovich analytification $X^{\mathrm{an}}$ when $K$ has a nontrivial rank-$1$ valuation, as we now explain. The surveys by Ducros~\cite{Duc1,Duc2} provide an essentially self-contained introduction to the model theoretic background to this section. The reader may safely skip this section, as the rest of the paper does not logically depend upon it.

\subsection*{The stable completion functor} Let $K$ be an algebraically closed valued field with value group $\Gamma$. Let $\mathbf M$ denote the category of algebraically closed valued extensions of $K$, where morphisms are taken to be isometric $K$-embeddings. For $F\in \mathbf M$, let $\mathbf M_F$ be the category of valued extensions of $F$ that belong to $\bf M$. Observe that this category $\bf M$ differs in a small but important way from $\bf{ACVF}$, defined previously. In the latter, one additionally keeps track of the particular embedding of the value group into $\RR^{(k)}$. 

Let $X = \spec(A)$ be an affine $F$-scheme. A \textit{type} $t$ on $X$ is an element of the valuative spectrum of $X$. That is, $t$ is the data of a scheme theoretic point $x$ of $X$, together with a valuation $\nu_x$ on the residue field of $X$ at $x$. in particular, a type $t$ thus gives rise to a valuation $\varphi_t$ on $A$.

A type $t$ is said to be \textit{$F$-definable} if and only if for every finite-dimensional $F$-subspace $E$ of $A$, the following subsets are $F$-definable in the sense of Definition \ref{definition: definable functors}:
\begin{itemize}
\item The set of elements $e\in E$ such that $\varphi_t(e) = \infty$;\vskip .2cm
\item The set of elements $e\in E$ such that $\varphi_t(e)\geq 0$.
\end{itemize}

A type $t$ is said to be \textit{orthogonal to $\Gamma$} if and only if it is $F$-definable and $\varphi_t$ takes values in $\nu(F)$. 

\begin{definition}
Let $X$ be an affine $K$-scheme of finite type and let $F\in \mathbf M$. The \textit{stable completion of $X$ at $F$} is the set $\widehat X(F)$ of types on $X\times_K F$ that are orthogonal to $\Gamma$. 
\end{definition}

	As with the Berkovich and Hahn analytifications, the set $\widehat X(F)$ is given the weak topology for the evaluation functions $\ev_f: \widehat X(F)\to \Gamma\sqcup\{\infty\}$ for all $f\in A$. The construction extends to arbitrary finite-type $K$-schemes in the natural way.

\subsection*{The comparison map} Suppose $K$ is complete with respect to a rank-$1$ valuation and that $F\in \mathbf M$ also has rank-$1$. An element of $\widehat X(F)$ can be interpreted as a valuation with values in $\nu(F)\subset \RR\sqcup \{\infty\}$. This gives rise to a continuous map
	\begin{equation}\label{equation: comparison to stable completion}
	\pi_F: \widehat X(F)\lra X^{\mathrm{an}}
	\end{equation}

Hrushovski and Loeser prove the following result about the comparison map $\pi_F$.

\begin{proposition}[{\cite[Lemma 14.1.1]{HL10}}]\label{HL's comparison}
If the valuation $F^{\times}\to\RR$ is surjective, and if $F$ is maximally complete, then the map (\ref{equation: comparison to stable completion}) is a proper surjection. 
\end{proposition}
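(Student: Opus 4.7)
The plan is to unpack what it means for $\pi_F$ to be a proper surjection. Since $F$ has rank $1$ and the valuation $\nu_F : F^\times \to \RR$ is surjective, a type $t \in \widehat{X}(F)$ is by construction orthogonal to $\Gamma = \RR$, and hence corresponds to an $F$-definable valuation $\varphi_t$ on the coordinate ring $A_F := K[X] \otimes_K F$ taking values in $\RR \sqcup \{\infty\}$. Restricting $\varphi_t$ to $K[X]$ yields a rank-$1$ valuation on $A := K[X]$, which is precisely the data of a Berkovich point. The map $\pi_F$ is this restriction; it is continuous because both source and target carry the weak topology generated by evaluation functions $\ev_f$, with those on $A$ factoring through the corresponding evaluations on $A_F$.

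For surjectivity, I would take a point $x \in X^{\mathrm{an}}$ represented by a valuation $v : A \to \RR \sqcup \{\infty\}$ and construct a preimage in $\widehat{X}(F)$. First, extend $v$ to a valuation $\tilde v$ on $A_F$ still taking values in $\RR \sqcup \{\infty\}$; such an extension exists because $F$ is algebraically closed with surjective valuation onto $\RR$. The decisive step is to ensure that this extension is $F$-definable, equivalently that it comes from a stably dominated type. Here maximal completeness is essential: by the classical theorem on immediate extensions, a maximally complete algebraically closed valued field admits no nontrivial immediate extension, and as a consequence the "generic" extensions of valuations to $A_F$ (those adjoining no new residues or values) are canonically determined and $F$-definable. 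Concretely, I would approximate $v$ by quasi-monomial valuations on affine models of $\mathrm{Spec}\,A/\varphi_t^{-1}(\infty)$, extend each such valuation by its Gauss extension to $A_F$, and pass to the limit using maximal completeness of $F$ to exhibit the resulting extension as a definable type in $\widehat{X}(F)$.

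For properness, I would use that both $\widehat{X}(F)$ and $X^{\mathrm{an}}$ carry weak topologies generated by evaluation maps to $\RR \sqcup \{\infty\}$, and that $\pi_F$ intertwines the evaluations $\ev_f$ for $f \in A \subset A_F$. A compact set $C \subset X^{\mathrm{an}}$ is characterized, for $X$ affine, by uniform bounds on finitely many coordinate evaluations $\ev_f$. The preimage $\pi_F^{-1}(C)$ satisfies the same bounds on the same functions. The model-theoretic content is then that the locus of $F$-definable $\Gamma$-orthogonal types satisfying such bounds is a closed subspace of the quasi-compact Stone space of complete types on $A_F$; hence $\pi_F^{-1}(C)$ is quasi-compact, and together with the Hausdorffness of $\widehat{X}(F)$ this yields compactness of $\pi_F^{-1}(C)$, i.e., properness.

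The hardest part, and the technical heart of the proof, is the surjectivity step: producing a $\Gamma$-orthogonal (equivalently $F$-definable, equivalently stably dominated) extension of an arbitrary rank-$1$ valuation from $A$ to $A_F$. Without maximal completeness of $F$ one has no canonical way to single out a definable extension among the many possible extensions of $v$, and the existence of such canonical definable extensions is precisely the nontrivial stability-theoretic input underlying the framework of \cite{HL10}. Spelling this out rigorously would require invoking the theory of stable domination over algebraically closed valued fields developed prior to \emph{loc. cit.}, which is the main technical obstacle I would need to navigate.
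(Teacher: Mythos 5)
The paper does not supply a proof of this proposition; it is stated as a citation to Lemma~14.1.1 of Hrushovski--Loeser, so there is no ``paper's own proof'' to compare against. The closest thing to a proof in the text is the argument given for the higher-rank analogue, Proposition~\ref{Hahn stable completion comparison}, and it is worth measuring your sketch against that.

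On surjectivity, the spirit of your argument is right---maximal completeness is what singles out a canonical, definable extension---but the concrete mechanism you describe is both vaguer than and different from the one the paper uses in its analogue. There, one passes from a point of $X^{\mathrm{an}}$ (equivalently $X^{\fH}$ in the rank-$1$ case) to the residue field $L$ at the kernel of the valuation, replaces $L$ by an algebraic closure, and then replaces that by its \emph{unique} maximally complete immediate extension $L^{\max}$ (Kaplansky's theorem); $F$ embeds into $L^{\max}$, giving a type on $X_{F}$, and the crucial stability-theoretic input that this type is orthogonal to $\Gamma$ is Theorem~12.18 of Haskell--Hrushovski--Macpherson, not an approximation by quasi-monomial valuations and Gauss extensions. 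You correctly flag that such an input is the ``technical heart,'' but you neither name it nor explain why the limit of Gauss extensions you propose would actually be definable; as written, that step is a gap rather than a proof.

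Your properness argument has a more serious flaw. You claim that the set of $F$-definable $\Gamma$-orthogonal types satisfying finitely many evaluation bounds is a closed subspace of the quasi-compact Stone space of complete types on $A_{F}$, and deduce quasi-compactness of $\pi_{F}^{-1}(C)$. But $\widehat{X}(F)$ carries the weak topology generated by evaluation maps to $\RR\sqcup\{\infty\}$, which is not the Stone topology, and the definable types do not form a closed (or even constructible) subset of the full space of types in general. Properness of $\pi_{F}$ in~\cite[Lemma 14.1.1]{HL10} is instead obtained from the prodefinable structure of $\widehat{X}$ and its definable compactness properties; the Stone-space compactness you invoke is not the relevant one, and the argument as given would not close. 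Since the paper itself does not prove properness of $\pi_F$ either (and its Hahn analogue asserts only surjectivity), you should either invoke~\cite{HL10} directly for properness, as the paper does, or rebuild the argument using the prodefinability of $\widehat{X}$ rather than the Stone space of types.
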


	The map (\ref{equation: comparison to stable completion}) plays a crucial role in \cite[Section 14]{HL10} for deducing tameness results about the Berkovich space from results about the spaces of types. The following extension of Proposition \ref{HL's comparison} gives further evidence that the Hahn analytification $X^\fH$ is an analogue of the Berkovich analytification in the higher rank setting.

	Let $K$ be an arbitrary algebraically closed valued field with Hahn valuation
$$
K^\times\xrightarrow{\ \ \nu\ \ }\Gamma\xrightarrow{\ \ \rho\ \ } \RR^{(k)}.
$$
Assume that $\rho$ is an embedding of ordered abelian groups. Note that this is not a serious restriction since we may always replace $\Gamma$ with the image of $\Gamma$ under $\rho$. 

\begin{proposition}\label{Hahn stable completion comparison}
If $F$ is a maximally complete, algebraically closed field extending $K$, with surjective valuation $F^{\times}\twoheadrightarrow\RR^{(k)}$ extending the valuation on $K$, then there is a natural continuous surjection
	$$
	\pi_F:\widehat X(F)\lra X^\fH.
	$$
\end{proposition}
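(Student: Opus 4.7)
The plan is to imitate, in the higher rank setting, the rank-$1$ argument of Hrushovski and Loeser underlying Proposition~\ref{HL's comparison}: I define $\pi_F$ by restricting the valuation packaged inside a type, observe that continuity is automatic from the two weak topologies, and then reduce surjectivity to a $K$-embedding of valued fields into $F$ furnished by maximal completeness.

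\emph{Construction of $\pi_F$.} A type $t\in\widehat{X}(F)$ consists of a scheme point $y_{t}$ of $X_{F}:=X\times_{K}F$ together with a valuation $\varphi_{t}$ on its residue field $\kappa(y_{t})$ extending the Hahn valuation of $F$. Orthogonality to $\Gamma$ forces $\varphi_{t}$ to take values in $\nu(F)=\RR^{(k)}$. On any affine open $\spec A\subset X$, the composition
\[
A\ \longrightarrow\ A\otimes_{K}F\ \longrightarrow\ \kappa(y_{t})\ \xrightarrow{\ \varphi_{t}\ }\ \RR^{(k)}\sqcup\{\infty\}
\]
is a ring valuation extending $\rho\circ\nu$ on $K$, and these glue over affine charts of $X$ to a well-defined point $\pi_F(t)\in |X^{\fH}|$. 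Both $\widehat X(F)$ and $X^{\fH}$ carry the coarsest topology making each evaluation $\ev_f$ continuous for $f$ a regular function on an affine open, and by construction $\pi_F$ intertwines these evaluations tautologically, so $\pi_F$ is continuous.

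\emph{Surjectivity.} Fix $x\in X^{\fH}$ and represent it on some affine open $\spec A\ni x$ by a valuation $\val_{x}:A\to\RR^{(k)}\sqcup\{\infty\}$ extending $\rho\circ\nu$. Let $\fp:=\val_{x}^{-1}(\infty)$, let $L:=\mathrm{Frac}(A/\fp)$ equipped with the induced Hahn valuation into $\RR^{(k)}$, and pass to an algebraic closure $\bar L$ carrying a valuation into $\RR^{(k)}$ (which exists since $\RR^{(k)}$ is divisible). Assume for the moment that one has produced a $K$-embedding $\iota:\bar L\hookrightarrow F$ of valued fields. Then the composite $A\to L\hookrightarrow\bar L\xrightarrow{\ \iota\ }F$ is a $K$-algebra morphism, i.e., an $F$-point $y\in X(F)$, and the tautological type $t_{y}$ at $y$ (evaluate a regular function at $y$, then apply $\nu_{F}$) is automatically $F$-definable and orthogonal to $\Gamma$. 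By construction the restriction of $\varphi_{t_{y}}$ to $A$ equals $\val_{x}$, so $\pi_F(t_{y})=x$.

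\emph{Main obstacle.} The whole argument thus reduces to producing the embedding $\iota$. This is exactly the content of Kaplansky's classical theorem on maximally complete valued fields: an algebraically closed valued extension $\bar L$ of $K$ whose value group embeds (over $\nu(K)$) into the value group of $F$, and whose residue field embeds (over the residue field of $K$) into the residue field of $F$, admits an isometric $K$-embedding into $F$ provided $F$ is maximally complete and algebraically closed. The value-group hypothesis is built into the statement of the proposition; any residue field compatibility can be arranged by replacing $\bar L$ by an appropriate immediate extension inside the maximally complete $F$. Since these embedding theorems are rank-insensitive, the rank-$1$ argument of HL10, Lemma~14.1.1 transfers with only notational changes to yield $\iota$ and complete the proof.
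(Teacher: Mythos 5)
Your construction of $\pi_F$ and the continuity argument agree with the paper, but the surjectivity step has a genuine gap. Your argument hinges on producing an isometric $K$-embedding $\iota\colon\bar L\hookrightarrow F$, so that the resulting type on $X_{F}$ is realized in $F$ itself and orthogonality to $\Gamma$ becomes trivial. That embedding need not exist. Take $K=\CC$ with the trivial valuation and $\rho$ trivial, let $F$ be any maximally complete algebraically closed extension of $\CC$ with residue field $\CC$ and value group $\RR^{(k)}$ (e.g.\ the Hahn series field with coefficients in $\CC$ and exponents in $\RR^{(k)}$), let $X=\A^{1}_{K}$, and let $\val_{x}$ be the trivial valuation at the generic point, so that $L=\CC(x)$ carries the trivial valuation and has residue field $\CC(x)$. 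An isometric $K$-embedding $\bar L\hookrightarrow F$ would induce a $\CC$-algebra embedding of $\overline{\CC(x)}$ into the residue field $\CC$ of $F$, which is impossible for transcendence degree reasons, irrespective of maximal completeness. Your suggestion that residue field compatibility ``can be arranged by replacing $\bar L$ by an appropriate immediate extension inside $F$'' is not a meaningful operation here: immediate extensions do not change the residue field, and there is no copy of $\bar L$ inside $F$ to start from.

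The paper embeds in the opposite direction: it builds a maximally complete field $L^{\max}$ with value group $\RR^{(k)}$ and residue field the algebraic closure of that of $L$, represents $x$ by a map $\spec(L^{\max})\to X$, and embeds $F$ into $L^{\max}$. The example above shows that this is the only direction that can work in general. The cost of doing so is that the resulting type on $X_{F}$ is realized in the proper extension $L^{\max}\supsetneq F$, so orthogonality to $\Gamma$ and $F$-definability are no longer automatic; this is exactly where maximal completeness of $F$ enters, via Haskell--Hrushovski--Macpherson's Theorem~12.18 together with the observation that $\RR^{(k)}$ admits no proper Archimedean extensions. Your proposal omits this step entirely. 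Note also that Hrushovski and Loeser's proof of their Lemma~14.1.1 likewise does not realize the type inside $F$, so the rank-$1$ argument you invoke does not in fact produce the embedding $\iota$.
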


\begin{proof}
Consider a point $x\in X^\fH$ associated to a valuation $\val_x:A\to \mathbb{R}^{(k)}_{\infty}$. The prime ideal $ \val_x^{-1}(\infty)$ gives rise to a scheme theoretic point of $X$, and hence a map $\spec(L)\to X$. The valuation $\val_x$ gives rise to a valuation on the field $L$. We may assume that $L$ is algebraically closed. Let $L^{\max}$ denote the field (unique by Kaplansky's theorem \cite{Kap42}) having value group $\RR^{(k)}$ and residue field equal to the residue field of $L$. We may represent the point $x$ by a map $\spec(L^{\max})\to X$. The field $F$ includes into $L^{\max}$. Thus we obtain a type $t$ on $\spec(A\otimes_K F)$. Since $F$ is maximally complete, and since the value group $\RR^{(k)}$ has no Archimedean extensions, we may apply the result of Haskell, Hrushovski, and Macpherson~\cite[Theorem~12.18]{HHM} to conclude that the type is orthogonal\footnote{See also the statement and remarks following~\cite[Theorem 2.9.2]{HL10}}. It is clear that this type $t$ maps to the point $x$ under $\pi_{F}$, and surjectivity follows.  
\end{proof}

\begin{remark}
Note that fields $F$ extending a Hahn valued field $K$ as in Proposition \ref{Hahn stable completion comparison} above always exist. Indeed, the group ring $$
	K\big[ t^{\mathbb{R}^{(k)}}\big]
	\ \ :=\ \ 
	\left\{
		\begin{array}{c}
		\mbox{all sums $f(t)=\sum_{\underline{r}\in\mathbb{R}^{(k)}}\!a_{\underline{r}}t^{\underline{r}}$}\\
		\mbox{with finite support $\mathrm{supp}_{\ \!}f(t)$}\\
		\end{array}
	\right\}
	$$
comes with a surjective map
	$$
	\nu_{\mathrm{mon}}\ :\ \ 
	K\big[ t^{\mathbb{R}^{(k)}}\big]
	\ \ \lra\!\!\!\!\rightarrow\ \ 
	\mathbb{R}^{(k)}\sqcup\{\infty\},
	$$
given by $\nu_{\mathrm{mon}}(0):=\infty$ and
	$$
	\nu_{\mathrm{mon}}\big(f(t)\big)\ :=\underset{\underline{r}\in\mathrm{supp}_{\ \!}f(t)}{\mathrm{inf}}\big(\nu_{K}(a_{\underline{r}})+\underline{r}\big)
	$$
for nonzero $f(t)=\sum_{\underline{r}\in\mathbb{R}^{(k)}}\!a_{\underline{r}}t^{\underline{r}}$. One checks, as in ~\cite[Proposition 2.1.2]{Ked}, that this defines a valuation. Passing to a maximally complete algebraic closure of this field yields the desired extension.
\end{remark}

\section{Extended tropicalization and an inverse limit theorem}\label{ref: tropicalization}

\subsection{Tropicalization in toric varieties}\label{extended tropicalization} Let $K$ be a field with valuation $\nu:K\to\RR^{(k)}\sqcup\{\infty\}$.
	
	The Hahn tropicalization of $K$-tori and their subvarieties extends naturally to subvarieties of toric varieties, generalizing the construction of Kajiwara and Payne~\cite{Kaj08, Pay09}. Let $M$ be a lattice and $N = \Hom_{\ZZ}(M,\ZZ)$ the dual lattice. Denote by $U(\sigma) = \spec\big(K[S_\sigma]\big)$ the affine toric variety associated to a cone $\sigma$ in $N_\RR$. Consider the set $N(\sigma)$ of semigroup homomorphisms from the commutative semigroup $S_{\sigma}$ into the commutative semigroup underlying $\RR^{(k)}\sqcup\{\infty\}$, i.e.,
$$
N(\sigma)\ :=\ \Hom_{\bold{SGrp}}\big(S_\sigma,\RR^{(k)}\sqcup\{\infty\}\big). 
$$
Give $N(\sigma)$ the subspace topology for its inclusion into $(\RR^{(k)}\sqcup\{\infty\}\big)^{S_\sigma}$, where $\RR^{(k)}\sqcup\{\infty\}$ is given the extended order topology. Let $x = \val_x:K[S_{\sigma}]\to\RR^{(k)}\sqcup\{\infty\}$ be a point of $U(\sigma)^\fH$. Restricting to $S_\sigma$, we obtain a point $\mathrm{trop}(X)$ of $N(\sigma)$. This furnishes a well defined, continuous tropicalization map
$$
\trop: U(\sigma)^\fH\lra N(\sigma).
$$

Let $\Delta$ be a fan in $N_\RR$ with associated toric variety $Y(\Delta)$. If $\tau$ is a face of some cone $\sigma$ in $\Delta$, then one may restrict functions from $U(\sigma)$ to $U(\tau)$ to obtain a map $S_\tau\hookrightarrow S_\sigma$ and consequently an embedding of $N(\tau)$ into $N(\sigma)$. Gluing along these inclusions, we obtain a topological space $N(\Delta)$. The tropicalizations on affine invariant patches can be glued to form a continuous map
	\begin{equation}\label{equation: extended trop}
	\trop: Y(\Delta)^\fH\lra N(\Delta).
	\end{equation}
	
The map above may be seen as a projection of $Y(\Delta)^\fH$ onto a closed subspace.	

\begin{proposition}
The tropicalization map (\ref{equation: extended trop}) admits a continuous section $s: N(\Delta)\to Y(\Delta)^\fH$.
\end{proposition}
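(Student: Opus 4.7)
The plan is to define the section $s$ locally on each invariant affine patch by the classical monomial-valuation formula, then to check that the local definitions are compatible with the gluing that produces $N(\Delta)$ and $Y(\Delta)^{\fH}$. For each cone $\sigma \in \Delta$ and each semigroup homomorphism $\varphi \in N(\sigma) = \Hom_{\mathbf{SGrp}}\big(S_{\sigma},\RR^{(k)}\sqcup\{\infty\}\big)$, define $s_{\sigma}(\varphi)$ to be the ring valuation on $K[S_{\sigma}]$ given by
$$
s_{\sigma}(\varphi)\Big(\sum_{u\in S_{\sigma}} a_{u}\chi^{u}\Big)
\ :=\ \min_{u\in S_{\sigma}}\big(\nu(a_{u})+\varphi(u)\big),
$$
with the conventions $r+\infty=\infty$ and $\min\emptyset=\infty$. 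The ultrametric inequality, the normalization $s_{\sigma}(\varphi)|_{K}=\nu$, and the identity $\trop\circ s_{\sigma}=\id_{N(\sigma)}$ (since $s_{\sigma}(\varphi)(\chi^{u})=\varphi(u)$) are immediate.

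The one nontrivial axiom is multiplicativity, and this is the main technical point. Note that $F:=\varphi^{-1}(\RR^{(k)})\subset S_{\sigma}$ is closed under addition and enjoys the face property (if $\varphi(u+v)<\infty$ then $\varphi(u),\varphi(v)<\infty$, since $\infty$ is absorbing); hence $F$ is a face of the semigroup $S_{\sigma}$, and so is of the form $S_{\sigma}\cap\tau^{\perp}$ for a unique face $\tau\prec\sigma$. Terms indexed by $u\notin F$ contribute $\infty$ to the minimum, so $s_{\sigma}(\varphi)$ factors through the quotient $K[S_{\sigma}]\twoheadrightarrow K[F]$ sending $\chi^{u}\mapsto 0$ for $u\notin F$. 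The resulting induced map $K[F]\to\RR^{(k)}\sqcup\{\infty\}$ is the standard $\RR^{(k)}$-valued monomial valuation on the integral domain $K[F]$ associated to $\varphi|_{F}\colon F\to\RR^{(k)}$, and multiplicativity in this case is the usual leading-term argument (no cancellation among terms achieving the minimum when one passes to the fraction field).

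For continuity, recall that $U(\sigma)^{\fH}$ carries the weak topology with respect to the evaluation maps $\ev_{f}$, so it suffices to verify that $\ev_{f}\circ s_{\sigma}$ is continuous on $N(\sigma)$ for every $f=\sum a_{u}\chi^{u}$. The formula above exhibits this composite as the minimum of finitely many functions $\varphi\mapsto \nu(a_{u})+\varphi(u)$, each of which is continuous because evaluation at $u$ is a coordinate projection of the inclusion $N(\sigma)\hookrightarrow(\RR^{(k)}\sqcup\{\infty\})^{S_{\sigma}}$. Since the minimum of finitely many continuous maps into a totally ordered space is continuous in the order topology, $s_{\sigma}$ is continuous.

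Finally, to glue the $s_{\sigma}$ across the fan: for $\tau\prec\sigma$ the inclusion $N(\tau)\hookrightarrow N(\sigma)$ is identified with restriction of semigroup homomorphisms along $S_{\sigma}\subset S_{\tau}$, and $U(\tau)^{\fH}\hookrightarrow U(\sigma)^{\fH}$ is induced by the localization $K[S_{\sigma}]\hookrightarrow K[S_{\tau}]$. For $\varphi\in N(\tau)$ with image $\tilde{\varphi}=\varphi|_{S_{\sigma}}\in N(\sigma)$, the formula makes it evident that $s_{\tau}(\varphi)$ restricts on $K[S_{\sigma}]$ to $s_{\sigma}(\tilde{\varphi})$. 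Hence the local sections assemble into a continuous global section $s\colon N(\Delta)\to Y(\Delta)^{\fH}$ with $\trop\circ s=\id$, as required.
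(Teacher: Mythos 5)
Your proof is correct and follows essentially the same route as the paper: reduce to a single affine patch and define the section by monomial valuations, then check compatibility on overlaps. The paper's version is a one-line deferral to Thuillier's construction of monomial valuations [Thu07, Prop.~2.9], whereas you have explicitly carried out the verification that the monomial formula defines a valuation (in particular the face/integral-domain argument for multiplicativity), checked that $\trop\circ s_\sigma=\mathrm{id}$, checked continuity via the min-of-continuous-functions fact in the order topology, and checked gluing along $N(\tau)\hookrightarrow N(\sigma)$ — all of which the paper leaves implicit.
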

\begin{proof}
It suffices to consider the affine case $Y(\Delta) = \spec_{\ \!} K[S_\sigma]$ for a single cone $\sigma$ in $\Delta$. Given a point $\omega \in N(\sigma)$, one constructs a monomial valuation with weight $\omega$, as in~\cite[Proposition 2.9]{Thu07}, and these monomial valuations provide the desired section. 
\end{proof}

Given a torus-equivariant map of toric varieties $\varphi: Y(\Delta)\to Y(\Delta')$, one may pullback characters and, following the construction above, obtain a map $\varphi^{\trop}: N(\Delta)\to N(\Delta')$. In particular, tropicalization is covariantly functorial.
	
A closed embedding $\iota: X\hookrightarrow Y(\Delta)$ induces a continuous embedding $X^\fH\hookrightarrow Y(\Delta)^\fH$. Composing with (\ref{equation: extended trop}) yields a subspace $\trop(X,\iota)$ of $N(\Delta)$. If $U$ is an open subscheme of $X$, then we let $\trop(U,\iota)$ denote the image of $U^{\fH}$ under $\trop:X^{\fH}\to\trop(X,\iota)$.

\subsection{Inverse limit theorem and prodefinability of $\pmb{X^\fH}$}
For this section, we assume that $K$ is algebraically closed. Let $X$ be a $K$-variety admitting a closed embedding into at least one toric variety. Covariant functoriality for torus-equivariant morphisms between toric varieties gives us a nonempty, inverse system $\mathcal{S}$ of tropicalizations of $X$ under closed embeddings into toric varieties. This, in turn, gives rise to a continuous map
	\begin{equation}\label{equation: tau}
	\tau: X^\fH\lra \varprojlim \trop(X,\iota). 
	\end{equation}

\begin{theorem}\label{thm: inverse-limit}
If $X$ is a $K$-variety admitting a closed embedding into at least one toric variety, then the map (\ref{equation: tau}) is a homeomorphism.
\end{theorem}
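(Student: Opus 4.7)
The plan is to model the argument on Payne's inverse limit theorem~\cite{Pay09} and its refinement by Foster--Gross--Payne~\cite{FGP} for Berkovich analytifications, adapting it to accommodate the value group $\RR^{(k)}\sqcup\{\infty\}$ equipped with its extended order topology. Continuity of $\tau$ is immediate from the universal property of the inverse limit together with the continuity of each individual tropicalization map $\trop:X^{\fH}\to\trop(X,\iota)$ established in Section~\ref{extended tropicalization}. So the content lies in proving bijectivity and openness.

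For injectivity, I would argue as follows. Suppose $x\neq y$ in $X^{\fH}$. Cover $X$ by affine opens; restricting to a common affine $\spec(A)\subset X$ over which both valuations $\val_{x}$ and $\val_{y}$ descend, there exists $f\in A$ with $\val_{x}(f)\neq\val_{y}(f)$. Starting from a fixed closed embedding $\iota_{0}:X\hookrightarrow Y(\Delta_{0})$ provided by the hypothesis, I would refine $\iota_{0}$ to a new closed embedding whose tropicalization distinguishes $x$ and $y$. The idea is to form a product embedding $(\iota_{0},\varphi):X\hookrightarrow Y(\Delta_{0})\times Y(\Delta_{f})$, where $\varphi:X\dashrightarrow Y(\Delta_{f})$ is the rational map determined by the function $f$ (or a pair of functions whose ratio is $f$), and $Y(\Delta_{f})$ is a suitable toric compactification chosen so that $f$ corresponds to a character on the dense torus. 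Since $\iota_{0}$ is already a closed embedding, the product is too, and by construction the refined tropicalization records the value $\val(f)$, separating $x$ from $y$.

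For surjectivity and openness I would proceed in tandem. Given a compatible system $(\omega_{\iota})\in\varprojlim\trop(X,\iota)$, the sections $s_{\iota}:N(\Delta_{\iota})\to Y(\Delta_{\iota})^{\fH}$ of Proposition~4.1.1, applied compatibly across the system, produce a compatible family of monomial valuations on the affine charts of the various toric targets. Pulling back along each $\iota$ and restricting to an affine open $\spec(A)\subset X$, these assemble into a single well-defined valuation $A\to\RR^{(k)}\sqcup\{\infty\}$, hence a point of $X^{\fH}$ mapping to the given system under $\tau$. For openness onto the image, I would show that the topology on $X^{\fH}$ is generated by the preimages of basic opens in the tropicalizations: for every $f\in\mathcal{O}(U)$ and every evaluation function $\ev_{f}$, the injectivity construction above produces an embedding $\iota$ and a semigroup element $m\in S_{\sigma}$ for which $\ev_{f}$ factors as $X^{\fH}\xrightarrow{\trop_{\iota}}N(\Delta)\xrightarrow{\ev_{m}}\RR^{(k)}\sqcup\{\infty\}$. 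This shows that $\tau$ pulls every subbasic open of $X^{\fH}$ back from an open in some $\trop(X,\iota)$, hence from an open in the inverse limit, making $\tau^{-1}$ continuous on the image.

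The main obstacle I expect is in the injectivity/openness step, specifically in exhibiting, for a regular function $f$ defined only on an affine open $U\subsetneq X$, a genuine closed embedding of the \emph{whole} variety $X$ into a toric variety whose tropicalization records $\val(f)$. In the rank-one Berkovich setting this is handled by judicious toric compactifications and the product trick; but in the Hahn setting one must additionally verify that the extended order topology on $N(\Delta)=\Hom_{\bold{SGrp}}(S_{\sigma},\RR^{(k)}\sqcup\{\infty\})$ is precisely the topology one needs for the inverse limit to match the extended order topology on $X^{\fH}$. A secondary technical point is that $\RR^{(k)}\sqcup\{\infty\}$ with the extended order topology is not compact, so one cannot simply invoke a compact-Hausdorff argument to upgrade a continuous bijection to a homeomorphism; the explicit factorization of evaluations through tropicalizations is therefore essential.
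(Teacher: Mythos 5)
Your overall strategy is the right one and matches the paper's: fix a base embedding $\iota_{0}$, cut down to the cofinal subcategory of embeddings factoring through $\iota_{0}$, separate points via refined embeddings that record $\val(f)$, and factor evaluations through tropicalizations to handle the topology (as you note, $\RR^{(k)}\sqcup\{\infty\}$ is not compact, so one cannot use a compactness shortcut). Continuity, injectivity, and openness are all in line with the paper.

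The genuine gap is in your surjectivity step. You propose applying the sections $s_{\iota}:N(\Delta_{\iota})\to Y(\Delta_{\iota})^{\fH}$ to a compatible system $(\omega_{\iota})$ and then ``pulling back along each $\iota$'' to get a valuation on an affine coordinate ring $A$ of $X$. But $\iota$ is a \emph{closed} embedding, so $A$ is a \emph{quotient} of the toric coordinate ring $K[S_{\sigma}]$, not a subring: a monomial valuation $s_{\iota}(\omega_{\iota})$ on $K[S_{\sigma}]$ descends to $A=K[S_{\sigma}]/I_{X}$ only if it takes the value $\infty$ on all of $I_{X}$, which monomial valuations generally do not. Equivalently, the section lands in $Y(\Delta_{\iota})^{\fH}$, not in $X^{\fH}$. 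The paper avoids this by never invoking the section in this direction; instead it builds the valuation directly on $K[U_{j}]$ element-by-element. Using the cover with property $(\star)$ supplied by Foster--Gross--Payne, for each $f\in K[U_{j}]$ it chooses an embedding $\iota_{f}$ in which $f$ pulls back from a monomial $a\chi^{u}$, and defines $\val_{x}(f):=\nu(a)+\omega_{\iota_{f}}(u)$. Well-definedness then follows from compatibility of the system; multiplicativity and subadditivity require a further input you omit: W\l odarczyk's algorithm guarantees that for any $f,g\in K[U_{j}]$ one can find a \emph{single} embedding in which $f$, $g$, and $f+g$ are all simultaneously pullbacks of monomials, which is what lets one verify the valuation axioms. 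Without that simultaneity, the definition of $\val_{x}$ by patching across different embeddings has no a priori reason to respect addition and multiplication. So the ``assemble into a single well-defined valuation'' step is where your proposal needs to be filled in, and the fill is exactly the $(\star)$ plus W\l odarczyk mechanism rather than a pullback of sections.
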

\begin{proof}
The proof is closely modeled after the proofs of \cite[Theorem 4.2]{Pay09} and \cite[Theorem 1.2]{FGP}. Fix, at the outset, a single closed embedding $\iota_{0}:X\hookrightarrow Y(\Delta_{0})$ into a toric variety. Let $\mathcal{S}_{0}$ denote the subcategory of $\mathcal{S}$ consisting of all those closed embeddings that factor through $\iota_{0}$. Then it suffices to prove that the map
	\begin{equation}\label{equation: tau_{0}}
	\tau_{0}: X^\fH\lra \varprojlim_{\iota\in\mathcal{S}_{0}} \trop(X,\iota)
	\end{equation}
is a homeomorphism.

	The arguments of \cite[Section 4]{FGP} make no use of the valuation on $K$. They require only that $K$ be algebraically closed. This means that there exists a finite open affine cover $\{U_{1},\dots,U_{r}\}$ of $X$ with the following property:
	\begin{itemize}
	\item[{ (${\star}$)}]
	For any $1\le i\le r$ and any nonzero regular function $f\in K[U_{j}]$, there exists a closed embedding $\iota_{f}:X\hookrightarrow Y(\Delta_{f})$ in $\mathcal{S}_{0}$ such that $U_{j}$ is the $\iota_{f}$-preimage of a torus-invariant open subset of $Y(\Delta_{f})$, and such that $f$ is the pullback of a monomial on $Y(\Delta_{f})$. 
	\end{itemize}
	
	We claim that for each open set $U_{j}$ in this cover, the map (\ref{equation: tau_{0}}) restricts to a homeomorphism of $U^{\fH}_{j}$ onto the preimage of $\trop(U_{j},\iota_{0})$ in $\varprojlim_{\iota\in\mathcal{S}_{0}} \trop(X,\iota)$.

	To verify injectivity, note that if $x\ne y\in U^{\fH}_{j}$, then there exists a regular function $f\in K[U_{j}]$ with $\mathrm{val}_{x}(f)\ne\mathrm{val}_{y}(f)$. Thus { (${\star}$)} provides us with a closed embedding $\iota\in\mathcal{S}_{0}$ such that the images of $x$ and $y$ in $\trop(X,\iota)$ are distinct.

	To verify surjectivity, fix a point inside the preimage of $\trop(U_{j},\iota_{0})$ in $\varprojlim_{\iota\in\mathcal{S}_{0}} \trop(X,\iota)$. Such a point is given by a compatible system $(v_{\iota})_{\iota\in\mathcal{S}_{0}}$ of points $v_{\iota}\in\trop(X,\iota)$, with $v_{\iota_{0}}\in\trop(U_{j},\iota_{0})$. For each $f\in K[U_{j}]$, use { (${\star}$)} to produce a closed embedding $\iota\in\mathcal{S}_{0}$ where $f$ becomes the pullback of a monomial $a\chi_{f}^{u}$, with $a\in K$. We claim that the assignment
	\begin{eqnarray*}
	\mathrm{val}_{x}\ :\ \ K[U_{j}]&\lra& \RR^{(k)}\sqcup\{\infty\}\\
	f&\longmapsto&\nu(a)+\nu_{\iota}(u)
	\end{eqnarray*}
is a well defined multiplicative valuation on $K[U_{j}]$. Well definedness of $\mathrm{val}_{x}$ follows from compatibility of the system $(v_{\iota})_{\iota\in\mathcal{S}_{0}}$ combined with the fact that $\mathcal{S}_{0}$ is closed under products. To verify multiplicativity and subadditivity, observe that the use of W\l odarczyk's algorithm \cite[Proof of Lemma~4.2]{Wlo} in the construction of the closed embedding in (${\star}$) gives us enough flexibility to construct closed embeddings $\iota\in\mathcal{S}_{0}$ in which any triple of functions, $f$, $g$, and $f+g$ in $K[U_{j}]$ become pullbacks of monomials.
	
	The definition of the topology on $U^{\fH}_{j}$ along with the existence of the closed embedding $\iota_{f}$ for each $f\in K[U_{j}]$ implies that the topology on $U^{\fH}_{j}$ coincides with the inverse limit topology on the preimage of $\trop(U_{j},\iota_{0})$ in $\varprojlim_{\iota\in\mathcal{S}_{0}} \trop(X,\iota)$. The local result at each $U_{j}$ in the open cover of $X$ implies the global result, i.e., that (\ref{equation: tau_{0}}) is a homeomoprhism. 
\end{proof}

The inverse limit statement of Theorem \ref{thm: inverse-limit} immediately implies the following model theoretic consequence. See~\cite[Section 3.1]{HL10} for the corresponding statement for the stable compleition.

\begin{theorem}\label{cor: prodefinability}
Let $X$ be a variety admitting at least one closed embedding into a toric variety. Then the space $X^\fH$ is a $\mathbf K$-prodefinable set in the sense of~\cite[Section 2.2]{HL10}.
\end{theorem}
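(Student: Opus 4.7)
The plan is to deduce prodefinability directly from the inverse limit theorem (Theorem~\ref{thm: inverse-limit}) together with the polyhedral description of tropicalizations and functoriality. By Theorem~\ref{thm: inverse-limit}, the hypothesis that $X$ admits at least one closed embedding into a toric variety yields the homeomorphism
\[
X^{\fH}\ \cong\ \varprojlim_{\iota}\ \trop(X,\iota),
\]
where $\iota$ ranges over the system $\mathcal S$ of closed embeddings into toric varieties, with transition morphisms induced by torus-equivariant morphisms between target toric varieties. Thus it suffices to exhibit this inverse system as a (small) filtered system of $\mathbf K$-definable sets and $\mathbf K$-definable morphisms, which is exactly the definition of a prodefinable set recalled in \cite[Section~2.2]{HL10}.

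First I would handle the individual objects in the system. For each closed embedding $\iota:X\hookrightarrow Y(\Delta)$ and each cone $\sigma\in\Delta$, the intersection of $X$ with the torus orbit $O_\sigma$ tropicalizes to a rational polyhedral complex in the partial compactification $N(\sigma)$ of $\Hom_{\ZZ}(M_\sigma,\RR^{(k)})$. For the dense torus orbit this is exactly Theorem~\ref{thm: polyhedral-complex}, and the analogous statement for the closed orbits follows by applying Theorem~\ref{thm: polyhedral-complex} to the closed subvariety $\iota(X)\cap O_\sigma$ of the smaller torus. Gluing these finitely many polyhedral pieces along the partial compactification stratification gives $\trop(X,\iota)$ the structure of a finite rational polyhedral complex in $N(\Delta)$. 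By the proposition of Section~\ref{sec: definability}, the underlying set of any such complex is a $\mathbf K$-definable subset of $N(\Delta)\subset\big(\RR^{(k)}\sqcup\{\infty\}\big)^{\!d}$.

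Next I would address the transition maps. A torus-equivariant morphism $\varphi:Y(\Delta)\to Y(\Delta')$ induces pullback $\varphi^{\ast}$ on character lattices, and the associated map $\varphi^{\trop}:N(\Delta)\to N(\Delta')$ is, on each stratum, a $\ZZ$-affine map composed with the obvious semigroup extension at the boundary strata. Such maps are piecewise $\ZZ$-affine and their graphs are cut out by finite conjunctions of equalities and inequalities of the form $\sum u_i\gamma_i\bowtie\delta$, hence definable. Restricting to the closed subspaces $\trop(X,\iota)\to\trop(X,\iota')$ preserves definability. By passing to a cofinal small subsystem, e.g., by indexing on isomorphism classes of toric embeddings or, following the argument of Theorem~\ref{thm: inverse-limit}, by restricting to embeddings factoring through a fixed $\iota_0$ and built by W\l odarczyk's construction, one obtains a small filtered inverse system of $\mathbf K$-definable sets and $\mathbf K$-definable maps whose limit is $X^{\fH}$.

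The main potential obstacle lies in ensuring that the indexing category of closed toric embeddings is suitably small and cofinal with the one used in Theorem~\ref{thm: inverse-limit}, so that the resulting pro-object genuinely lies in the category of small filtered pro-definable sets of~\cite[Section~2.2]{HL10}. This is handled by the cofinality statement established within the proof of Theorem~\ref{thm: inverse-limit}, which shows that the subsystem $\mathcal S_0$ of embeddings factoring through a single fixed $\iota_0$ already computes $X^{\fH}$; this subsystem is small, and its transition maps were verified above to be $\mathbf K$-definable. Once all three ingredients—definability of each $\trop(X,\iota)$, definability of the transition maps, and smallness of a cofinal subsystem—are in place, prodefinability is immediate from the inverse limit identification.
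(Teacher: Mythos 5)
Your argument takes exactly the same route as the paper, which deduces Theorem~\ref{cor: prodefinability} ``immediately'' from the inverse limit identification $X^{\fH}\cong\varprojlim\trop(X,\iota)$ of Theorem~\ref{thm: inverse-limit}; the paper simply omits the verification you supply. Your three checks---$\mathbf K$-definability of each $\trop(X,\iota)$ via Theorem~\ref{thm: polyhedral-complex} applied stratum-by-stratum, $\mathbf K$-definability of the piecewise $\ZZ$-affine transition maps, and smallness/cofinality of the indexing system $\mathcal S_0$ from the proof of Theorem~\ref{thm: inverse-limit}---are precisely what is needed to realize $X^{\fH}$ as a pro-definable set in the sense of~\cite[Section~2.2]{HL10}, so the proposal is correct.
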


\begin{remark}{\bf Scheme theoretic and universal tropicalization. } The constructions presented in this paper are compatible with recent work in which Giansiracusa and Giansiracusa develop a ``scheme theoretic'' framework for tropicalization \cite{GG13, GG14}. Given a subvariety $X$ of a toric variety over a Hahn valued field $K$, one may consider the associated tropical scheme $\mathcal T\textrm{rop}(X)$, as constructed in~\cite{GG13}. The homomorphism $\rho:\Gamma\to \RR^{(k)}$ allows one to consider the $\RR^{(k)}\sqcup\{\infty\}$-valued points of $\mathcal T\textrm{rop}(X)$. This underlying set of points coincides with the extended Hahn tropicalization above. Similarly, the $\RR^{(k)}\sqcup\{\infty\}$-valued points of the \textit{universal embedding} of $X$ constructed in~\cite{GG14} coincide with the underlying point set of the Hahn analytification. Together with Theorem~\ref{thm: huber-comparison}, this makes precise the observation~\cite[pg.3]{GG14} that the tropicalization of the universal embedding contains information about the Huber adic space.
\end{remark}

\bibliographystyle{siam}
\bibliography{HighRankDegenerations}

\end{document}